\documentclass[11pt]{amsart}
\usepackage{amscd}
\usepackage{amssymb}
\usepackage[matrix,arrow]{xy}
\evensidemargin 0.25truein \oddsidemargin 0.25truein \textwidth 5.9truein
\textheight 8.0truein

\newtheorem{theorem}{Theorem}[section]

\newtheorem{proposition}[theorem]{Proposition}

\newtheorem{lemma}[theorem]{Lemma}

\newtheorem{corollary}[theorem]{Corollary}

\newtheorem{remark}[theorem]{Remark}

\newtheorem{definition}[theorem]{Definition}

\newcommand{\CCC}{\mathbb{C}}

\newcommand{\QQQ}{\mathbb{Q}}
\newcommand{\ZZZ}{\mathbb{Z}}
\newcommand{\OOO}{\mathcal{O}}

\newcommand{\Tor}{\mathrm{Tor}}
\newcommand{\Aut}{\mathrm{Aut}}
\newcommand{\Vect}{\mathrm{Vect}}

\begin{document}
\title[The C*-algebra of a vector bundle]{The C*-algebra of a vector bundle}
\thanks{The author was partially supported by
NSF grant \#DMS--0801173}
\author{Marius Dadarlat}
\address{Department of Mathematics, Purdue University, West
Lafayette IN 47907, U.S.A.} \email{mdd@math.purdue.edu}
\date{\today}

\begin{abstract} We prove that the Cuntz-Pimsner algebra $O_E$ of a vector bundle $E$ of rank $\geq 2$ over
a compact metrizable space $X$ is  determined up to an isomorphism of $C(X)$-algebras by the  ideal $(1-[E])K^0(X)$ of the K-theory ring $K^0(X)$. Moreover, if $E$ and $F$ are vector bundles of rank $\geq 2$, then
a unital embedding of $C(X)$-algebras $O_E\subset O_F$ exists if and only if
$1-[E]$ is divisible by $1-[F]$ in the ring $K^0(X)$.
We  introduce related, but more computable K-theory and
cohomology invariants for $O_E$ and study their completeness.
   As an application we classify
  the unital separable continuous fields with fibers isomorphic to the Cuntz algebra $O_{m+1}$ over a finite connected CW complex
 $X$ of dimension $d\leq 2m+3$ provided that the cohomology of $X$ has no $m$-torsion.
\end{abstract}
\subjclass[2010]{46L35, 46L80, 19K35}
\maketitle
\section{Introduction}
 Let $E\in \Vect(X)$ be a locally trivial complex vector bundle
 over a compact  Hausdorff space  $X$. If we endow $E$ with a hermitian metric, then the space $\Gamma(E)$ of all continuous sections  of $E$
becomes a finitely generated projective Hilbert $C(X)$-module, whose isomorphism class does not depend on the choice of the  metric.
 Since the action of $C(X)$ is central,  $\Gamma(E)$ is naturally a Hilbert $C(X)$-bimodule.
Let $O_E$ denote the Cuntz-Pimsner algebra associated to $\Gamma(E)$ as defined in \cite{Pim:CK-alg}.
Since $\Gamma(E)$ is projective, $O_E$ is isomorphic to the Doplicher-Roberts algebra of $\Gamma(E)$, see \cite{DPZ}. Let us recall that if $\mathcal{E}$ is the
Hilbert $C(X)$-module $\oplus_{n\geq 0} \Gamma(E)^{\otimes n}$, then
$O_E$ is obtained as the quotient of the Toeplitz (or tensor) C*-algebra
$T_E$ generated by the multiplication operators $T_\xi:\mathcal{E}\to \mathcal{E}$,
$T_\xi(\eta)=\xi\otimes \eta$, $\xi\in \Gamma(E)$, $\eta\in \mathcal{E}$, by the ideal of ``compact operators" $K(\mathcal{E})$.
If $X$ is a point, then $E\cong\CCC^{n}$ for some $n\geq 1$, and $O_E$ is isomorphic to the Cuntz
algebra $O_{n}$, with the convention that $O_1=C(\mathbb{T})$.
 In  the general case,  $O_E$ is a  locally trivial unital $C(X)$-algebra (continuous field)
whose fiber at $x$ is isomorphic to the Cuntz algebra $O_{n(x)}$, where $n(x)$
is the rank of the fiber $E_x$ of $E$, see \cite[Prop.~2]{Vasselli:1}.

The motivation for this paper comes from an informal question of  Cuntz:
What are the invariants of $E$ captured by the $C(X)$-algebra $O_E$?
In other words, how are $E$ and $F$ related if there is a $C(X)$-linear
$*$-isomorphism $O_E\cong O_F$.
We have shown in \cite{Dad:bundles-fdspaces} that if $X$ has finite covering dimension, then all separable unital $C(X)$-algebras with fibers isomorphic to a fixed Cuntz algebra $O_n$, $n\geq 2$, are automatically locally trivial.
Thus it is also natural to ask which of these algebras are isomorphic to Cuntz-Pimsner algebras associated to a vector bundle of constant
rank $n$.

If $E$ is a line bundle, then $O_E$ is commutative with spectrum homeomorphic to  the circle bundle of $E$, see
\cite{Vasselli:2}. One verifies that if $E,F\in Vect_1(X)$ and $X$ is path-connected, then $O_E\cong O_F$ as $C(X)$-algebras if and only if $E\cong F$ or $E\cong \bar{F},$ where $\bar{F}$ is the conjugate
of $F$, see Proposition~\ref{line-bundles}.
 In view of  this property, we shall only consider vector bundles of rank $\geq 2$.
 In the first part of the paper we answer the isomorphism question for $O_E$.
\begin{theorem}\label{Thm:complete-obstruction_O_E}
Let $X$ be a compact metrizable space and let $E,F \in \Vect(X)$ be complex vector bundles  of rank $\geq 2$. Then $O_E$ embeds as a unital $C(X)$-subalgebra of $O_F$ if and only if  there is $h\in K^0(X)$ such that $1-[E]=(1-[F])h$.
Moreover, $O_E\cong O_F$ as $C(X)$-algebras if and only if there is $h$ as above of virtual rank one.
\end{theorem}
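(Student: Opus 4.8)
The plan is to describe $O_E$ homologically as the mapping cone of ``multiplication by $1-[E]$'' and then to read both directions off the resulting six-term sequence, using a fibered Kirchberg--Phillips realization to pass from $KK^{C(X)}$-data back to honest $C(X)$-homomorphisms. Write $R=K^0(X)$ for the $K$-theory ring and $\iota_E\colon C(X)\to O_E$ for the canonical unital inclusion. The starting point is the Pimsner six-term exact sequence for $O_E$ \cite{Pim:CK-alg}, in which the boundary maps on $K_*(C(X))=K^*(X)$ are multiplication by $1-[E]$ in $R$; in particular one has exactness
\[
K^0(X)\xrightarrow{\,1-[E]\,}K^0(X)\xrightarrow{\,\iota_{E*}\,}K_0(O_E),
\]
so that $\Ker\iota_{E*}=(1-[E])R$, and likewise for $F$. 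More structurally, this exhibits $O_E$ as the cone of the morphism $1-[E]\colon C(X)\to C(X)$ in the triangulated category $KK^{C(X)}$, a description I will exploit for sufficiency.

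For necessity, suppose $\varphi\colon O_E\to O_F$ is a unital $C(X)$-embedding. Since a $C(X)$-linear unital $*$-homomorphism out of $C(X)$ is forced to be $f\mapsto f\cdot 1$, the composite $\varphi\circ\iota_E$ must equal $\iota_F$. Applying $K_0$ gives $\iota_{F*}=\varphi_*\circ\iota_{E*}$, whence $\Ker\iota_{E*}\subseteq\Ker\iota_{F*}$, that is $(1-[E])R\subseteq(1-[F])R$, and in particular $1-[E]=(1-[F])h$ for some $h\in R$. If $\varphi$ is moreover an isomorphism, the same argument in the reverse direction produces $g$ with $1-[F]=(1-[E])g$; comparing virtual ranks in $H^0(X;\ZZZ)$ yields $\mathrm{rank}(h)\,\mathrm{rank}(g)=1$ with both factors positive (as $E,F$ have rank $\geq 2$), so $\mathrm{rank}(h)=1$, as required.

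For sufficiency I will run the cone description. Given $1-[E]=(1-[F])h$, the square with top arrow $1-[E]$, bottom arrow $1-[F]$, left vertical $h$ and right vertical $\id$ commutes in $KK^{C(X)}$, so by the fill-in axiom for triangulated categories it induces a class $\Phi\in KK^{C(X)}(O_E,O_F)$, and $\Phi$ is unital because it carries $[1_{O_E}]=\iota_{E*}[1]$ to $\iota_{F*}[1]=[1_{O_F}]$. A fibered Kirchberg--Phillips realization theorem for continuous fields of Kirchberg algebras then produces a unital $C(X)$-homomorphism $\phi\colon O_E\to O_F$ lifting $\Phi$; since every fiber map is a unital, hence injective, homomorphism of the simple algebra $O_{n(x)}$, the map $\phi$ is automatically an embedding. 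For the isomorphism statement I will use that a virtual-rank-one $h$ is necessarily a unit of $R$: writing $h=1+\nu$ with $\nu$ in the reduced group $\widetilde{K}^0(X)$, and using $K^0(X)=\varinjlim K^0(X_i)$ over finite complexes $X_i$ (where the reduced $K$-theory is a nil ideal), $h$ is the image of a unit and is therefore invertible. The square with verticals $h^{-1}$ and $\id$ then produces an inverse morphism $O_F\to O_E$, so $\Phi$ is a $KK^{C(X)}$-equivalence, and the classification upgrades $\phi$ to a $C(X)$-isomorphism.

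The main obstacle is exactly this realization/classification input: that a unital (respectively invertible) $KK^{C(X)}$-class between the fields $O_E$ and $O_F$ is implemented by a genuine $C(X)$-homomorphism (respectively isomorphism). This rests on $O_E$ being a sufficiently well-behaved continuous field---its fibers $O_{n(x)}$ are nuclear, simple, purely infinite and satisfy the UCT---and such statements are available only over finite-dimensional bases (cf.\ \cite{Dad:bundles-fdspaces}). To reach an arbitrary compact metrizable $X$ I will write $X=\varprojlim X_i$ with $X_i$ finite CW complexes, observe that $E$, $F$ and the relation $1-[E]=(1-[F])h$ all descend to some $X_i$ by continuity of $\Vect(-)$ and $K^0(-)$, solve the problem over the finite-dimensional $X_i$, and pull the resulting $C(X_i)$-homomorphism back along the projection $X\to X_i$. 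Verifying that the fibered classification genuinely applies to these (non-$O_\infty$-stable) fields and controlling this limiting argument is where the real work will lie.
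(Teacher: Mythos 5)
Your necessity direction coincides with the paper's proof: the Pimsner exact sequence gives $\ker(\iota_E)_*=(1-[E])K^0(X)$, the forced identity $\varphi\circ\iota_E=\iota_F$ gives the inclusion of ideals, and the rank comparison pins down virtual rank one; this part is fine, as is your direct-limit argument that a virtual-rank-one class is a unit of $K^0(X)$. The skeleton of your sufficiency argument---exhibit $O_E$ as the cone of $1-[E]$ in the triangulated category $\mathrm{KK}(X)$, use the fill-in axiom to produce a unital class $\Phi\in KK_X(O_E,O_F)$, then lift $\Phi$ to a unital $C(X)$-homomorphism (resp.\ isomorphism) by a fibered classification theorem---is also exactly the paper's. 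However, two of the supporting claims on which you hang the sufficiency direction are wrong, and they leave genuine gaps.

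First, $O_\infty$-stability. You describe $O_E$ and $O_F$ as ``non-$O_\infty$-stable'' fields and defer, as an unresolved worry, whether the fibered classification applies to them. In fact they \emph{are} $O_\infty$-stable: the first step of the paper's sufficiency proof is $O_E\otimes O_\infty\cong O_E$ by Blanchard--Kirchberg \cite{BK}, and this cannot be postponed, since every Kirchberg--Phillips-type lifting theorem (over finite-dimensional bases or otherwise) requires strong pure infiniteness, so without this step your final appeal to classification has no theorem behind it. Stabilizing by $O_\infty$ is also what allows the classes $[\mathrm{id}]-[E]$, $[\mathcal{E}]^{-1}$ and $h$ to be realized as honest $C(X)$-linear $*$-homomorphisms, so that the cones being compared are mapping cones of homomorphisms and the class of the unit can actually be tracked (the paper verifies $(\mu_E\otimes\mathrm{id})\circ\lambda_E\circ(Si_E\otimes\mathrm{id})=S\iota_E\otimes\mathrm{id}$ at the level of homomorphisms); your bare assertion that $O_E$ ``is the cone of $1-[E]$'' with the unit matching is precisely the content that requires this work. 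Second, the finite-dimensionality obstacle around which you build your reduction does not exist: the lifting theorem the paper invokes is Kirchberg's \cite{Kir:Michael}, which applies to separable nuclear strongly purely infinite $C(X)$-algebras over an \emph{arbitrary} compact metrizable $X$, with no dimension hypothesis. Hence the inverse-limit detour is unnecessary; worse, as written it cannot be closed, because \cite{Dad:bundles-fdspaces} does not contain the unital $KK_X$-lifting statement you attribute to it (its results concern automatic local triviality and homotopy classification), so over the finite complexes $X_i$ you would be forced back to \cite{Kir:Michael} anyway, and your pullback step silently uses the base-change compatibility $O_{\pi_i^*E_i}\cong C(X)\otimes_{C(X_i)}O_{E_i}$, which you never establish.
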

Thus  the principal ideal  $(1-[E])K^0(X)$
determines $O_E$ up to isomorphism and an inclusion of principal ideals   $(1-[E])K^0(X)\subset  (1-[F])K^0(X)$
 corresponds to unital embeddings $O_E\subset O_F$.  In particular if $E\in \Vect_{m+1}(X)$,
 then $O_E\cong C(X)\otimes O_{m+1}$ if and only if $[E]-1$ is divisible by $m\geq 1$.

 Let $\widetilde K^0(X)=\ker(K^0(X)\stackrel{rank}{\longrightarrow}H^0(X,\ZZZ))$ be the subgroup of $K^0(X)$ corresponding to elements
of virtual rank zero,  and set $[\widetilde E]:=[E]-\mathrm{rank}(E)\in \widetilde K^0(X)$.
We denote by $H^*(X,\ZZZ)$ the \v{C}ech cohomology.
Using the nilpotency of $\widetilde K^0(X)$ we derive the following:
\begin{theorem}\label{thm:main_result-intro2} Let $X$ be a compact metrizable space of finite dimension $n$. Suppose that  $\Tor(K^0(X),\ZZZ/m)=0$. If
$E,F\in Vect_{m+1}(X)$, then $O_E\cong O_F$ as $C(X)$-algebras if and only if
$([\widetilde E]-[\widetilde F])\left(\sum_{k=1}^n (-1)^{k-1}m^{n-k}[\widetilde F]^{k-1}\right)$ is divisible by
$m^n$ in $\widetilde K^0(X)$.
 \end{theorem}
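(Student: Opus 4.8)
The plan is to convert the $C(X)$-algebra isomorphism question into a purely algebraic divisibility problem inside the nilpotent ideal $\widetilde K^0(X)$, and then to solve that problem by producing an explicit ``inverse'' for the relevant divisor. First I would invoke Theorem~\ref{Thm:complete-obstruction_O_E}: since $E,F\in Vect_{m+1}(X)$ we have $1-[E]=-m-[\widetilde E]$ and $1-[F]=-m-[\widetilde F]$, and every $h\in K^0(X)$ of virtual rank one can be written $h=1+\eta$ with $\eta\in\widetilde K^0(X)$. Substituting into $1-[E]=(1-[F])h$, expanding, and cancelling the common rank term $-m$, one finds that $O_E\cong O_F$ as $C(X)$-algebras if and only if there exists $\eta\in\widetilde K^0(X)$ with $[\widetilde E]-[\widetilde F]=\eta\,(m+[\widetilde F])$. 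This reduces everything to deciding when $m+[\widetilde F]$ ``divides'' $[\widetilde E]-[\widetilde F]$ in $\widetilde K^0(X)$.

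Write $u=[\widetilde F]$ and let $s=\sum_{k=1}^n(-1)^{k-1}m^{n-k}u^{k-1}$ be the element appearing in the statement. The engine of the proof is the telescoping identity $(m+u)\,s=m^n+(-1)^{n-1}u^n$. Here I would use the nilpotency of $\widetilde K^0(X)$: because complex K-theory sits in even skeletal filtration degrees, $\widetilde K^0(X)=F^2$, so any product of $n$ rank-zero classes lies in $F^{2n}\subseteq F^{n+1}=0$ for $n\geq 1$. In particular $u^n=0$, and the identity collapses to $(m+u)\,s=m^n$, so that $s$ is playing the role of $m^n(m+u)^{-1}$.

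With this identity both implications follow formally. For the forward direction, if $[\widetilde E]-[\widetilde F]=\eta(m+u)$ then multiplying by $s$ gives $([\widetilde E]-[\widetilde F])\,s=\eta(m+u)\,s=m^n\eta$, so the displayed quantity is divisible by $m^n$. For the converse, assume $([\widetilde E]-[\widetilde F])\,s=m^n\xi$ with $\xi\in\widetilde K^0(X)$; I would try $\eta=\xi$ and check that $b:=\xi(m+u)-([\widetilde E]-[\widetilde F])$ lies in $\widetilde K^0(X)$ and satisfies $b\,s=\xi m^n-m^n\xi=0$, after which it remains to conclude $b=0$.

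The hard part will be exactly this final cancellation, and it is where the hypothesis $\Tor(K^0(X),\ZZZ/m)=0$ becomes essential: that condition means multiplication by $m$, hence by every power of $m$, is injective on $K^0(X)$. To exploit it I would write $s=m^{n-1}+N$ with $N=\sum_{k=2}^n(-1)^{k-1}m^{n-k}u^{k-1}\in\widetilde K^0(X)$ nilpotent, so that $b\,s=0$ yields $b\,m^{n-1}=-b\,N$; iterating (multiply by $m^{n-1}$ and re-substitute) gives $b\,m^{k(n-1)}=(-1)^k b\,N^k$, which vanishes as soon as $N^k=0$, i.e.\ for $k\geq n$. Injectivity of multiplication by $m^{k(n-1)}$ then forces $b=0$, finishing the converse. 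I expect the only genuine subtlety to be the filtration bookkeeping that guarantees $u^n=0$ and $N^n=0$; once the single identity $(m+u)\,s=m^n$ is in place, the rest is formal.
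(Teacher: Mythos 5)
Your overall strategy coincides with the paper's: reduce via Theorem~\ref{Thm:complete-obstruction_O_E} to the purely algebraic statement that $[\widetilde E]-[\widetilde F]=\eta\,(m+[\widetilde F])$ for some $\eta\in\widetilde K^0(X)$, then exploit the telescoping identity for $s=\sum_{k=1}^n(-1)^{k-1}m^{n-k}[\widetilde F]^{k-1}$ (your $s$ is exactly the paper's $T_n([\widetilde F])$ from Lemma~\ref{lemma:B}) together with nilpotency of $\widetilde K^0(X)$ and the absence of $m$-torsion. Your converse, via the iteration $b\,m^{k(n-1)}=(-1)^k b N^k$, is a longer but valid substitute for the paper's one-step trick of multiplying $([\widetilde E]-[\widetilde F])s=m^n\xi$ once more by $(m+[\widetilde F])$ and killing $([\widetilde E]-[\widetilde F])(-[\widetilde F])^n$ by nilpotency.

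The genuine gap is the nilpotency step. You justify $u^n=0$ by asserting that $\widetilde K^0(X)=F^2$ in the skeletal filtration and that $F^{n+1}=0$ because $\dim X=n$. But $X$ is only a compact metrizable space; it has no CW structure, hence no skeletal filtration, and the vanishing of the filtration above the dimension is not a fact you can quote here -- it is precisely the content that must be supplied to pass from finite complexes to this generality. The paper devotes the opening of its proof to exactly this: it embeds $X$ in $\mathbb{R}^{2n+1}$, writes $X$ as a decreasing intersection of polyhedra $X_i$ of dimension $\leq 2n+1$, and uses continuity of K-theory, $\widetilde K^0(X)\cong\varinjlim \widetilde K^0(X_i)$, to conclude $\widetilde K^0(X)^{n+1}=\{0\}$. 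Note this yields only the $(n+1)$-fold vanishing, not your stronger claim $u^n=0$; the stronger claim is in fact true, but it needs approximation of $X$ by polyhedra of dimension $\leq n$ (Freudenthal's theorem), which you never invoke. Fortunately your argument survives on the weaker input: with only $\widetilde K^0(X)^{n+1}=\{0\}$ the identity reads $(m+u)s=m^n-(-u)^n$, and in every place the error term $(-u)^n$ enters your computation it is multiplied by another element of $\widetilde K^0(X)$ (namely $\eta$, $\xi$, or $b$), hence lies in $\widetilde K^0(X)^{n+1}=\{0\}$; likewise $bN^n=0$ suffices to terminate your iteration. So the proof is repairable, but as written the step that makes the theorem about compact metrizable spaces rather than finite CW complexes is missing.
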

 In view of Theorem~\ref{Thm:complete-obstruction_O_E} it is natural to seek explicit and computable invariants (e.g. characteristic classes) of a vector bundle $E$ that depend only on the
principal ideal  $(1-[E])K^0(X)$ and hence which
are invariants of $O_E$.

For each $m\geq 1$, consider the sequence of polynomials $p_n\in \ZZZ[x]$,
\begin{equation}\label{def:pn}
 p_n(x)= \ell(n)\,m^n\,\log\left(1+\frac{x}{m}\right)_{[n]}=  \sum_{k=1}^n (-1)^{k-1} \frac{\ell(n)}{k}m^{n-k}x^k,
\end{equation}
where $\ell(n)$ denotes the least common multiple of the numbers $\{1,2,...,n\}$ and the index
$[n]$ indicates that the formal series of the natural logarithm is truncated after its  $n$th term.
\begin{theorem}\label{thm:k-theory-inv}
 Let $X$ be a  finite CW complex of dimension $d$ and let $E,F \in \Vect_{m+1}(X)$.
If $O_E\cong O_F$  as $C(X)$-algebras,  then
$p_{\lfloor d/2 \rfloor}([\widetilde E])-p_{\lfloor d/2 \rfloor}([\widetilde F])$ is divisible by $m^{\lfloor d/2 \rfloor}$ in $\widetilde K^0(X)$.
\end{theorem}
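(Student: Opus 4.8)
The plan is to translate the hypothesis into $K$-theory via Theorem~\ref{Thm:complete-obstruction_O_E} and then extract the stated divisibility from a formal logarithm identity, using the nilpotency of $\widetilde K^0(X)$ to control all the denominators that appear. Set $N=\lfloor d/2\rfloor$ and write $a=[\widetilde E]$, $b=[\widetilde F]\in\widetilde K^0(X)$. Since $\mathrm{rank}(E)=\mathrm{rank}(F)=m+1$, we have $1-[E]=-m-a$ and $1-[F]=-m-b$. First I would invoke Theorem~\ref{Thm:complete-obstruction_O_E}: an isomorphism $O_E\cong O_F$ of $C(X)$-algebras produces $h\in K^0(X)$ of virtual rank one with $1-[E]=(1-[F])h$. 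Writing $h=1+t$ with $t\in\widetilde K^0(X)$ and clearing signs, this says $m+a=(m+b)(1+t)$, equivalently $a-b=(m+b)t$. The second ingredient is the estimate $\widetilde K^0(X)^{N+1}=0$: because $X$ is a $d$-dimensional CW complex and only even cohomology contributes to $K^0$, the Atiyah--Hirzebruch filtration satisfies $\widetilde K^0(X)=F^1=F^2$ and $\widetilde K^0(X)^{k}\subseteq F^{2k}$, so a product of $N+1$ elements of $\widetilde K^0(X)$ lies in $F^{2N+2}\subseteq F^{d+1}=0$. In particular $y^{N+1}=0$ for every $y\in\widetilde K^0(X)$.

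With these in hand, the heart of the argument is a formal logarithm computation. Over $\QQQ$ the relation $m+a=(m+b)(1+t)$ becomes $1+a/m=(1+b/m)(1+t)$, whence $\log(1+a/m)=\log(1+b/m)+\log(1+t)$ in $\widetilde K^0(X)\otimes\QQQ$. Multiplying by $\ell(N)m^N$ and using that the series of $\log$ truncates after the $N$th term (as $y^{N+1}=0$), the two logarithms on the left become exactly $p_N(a)$ and $p_N(b)$, while the right-hand logarithm contributes
\begin{equation*}
\ell(N)m^N\log(1+t)=m^N\sum_{k=1}^{N}(-1)^{k-1}\frac{\ell(N)}{k}\,t^{k}=:m^N g,
\end{equation*}
where $g\in\widetilde K^0(X)$ is integral because $\ell(N)/k\in\ZZZ$. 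Thus $p_N(a)-p_N(b)=m^N g$ holds in $\widetilde K^0(X)\otimes\QQQ$.

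It remains to upgrade this to an \emph{integral} identity, and this is the step that needs care: a priori $\widetilde K^0(X)$ may have torsion, so equality after $\otimes\QQQ$ only determines the two sides up to a torsion element, which is not enough for divisibility by $m^N$. I would remove this ambiguity by noting that the computation above is a universal polynomial identity. Introduce indeterminates $\beta,\tau$, form $R=\ZZZ[\beta,\tau]/J$ with $J$ generated by all monomials of total degree $\geq N+1$, and set $\alpha:=\beta+m\tau+\beta\tau$. Since $\alpha,\beta,\tau$ lie in the augmentation ideal of $R$, their $(N{+}1)$st powers vanish, so the logarithm calculation runs verbatim in $R\otimes\QQQ$ and gives $p_N(\alpha)-p_N(\beta)=m^N g(\tau)$ there; the crucial point is that in every surviving term the negative powers of $m$ are cancelled by the vanishing of the high-order monomials, leaving only non-negative powers of $m$ and integer coefficients $\ell(N)/k$, so all three expressions already lie in the free $\ZZZ$-module $R$. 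Because $R\hookrightarrow R\otimes\QQQ$, the identity $p_N(\alpha)-p_N(\beta)=m^N g(\tau)$ holds in $R$ itself.

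Finally I would specialize through the ring homomorphism $R\to K^0(X)$ determined by $\beta\mapsto[\widetilde F]$, $\tau\mapsto t$, which is well defined since every monomial of degree $\geq N+1$ maps into $\widetilde K^0(X)^{N+1}=0$. Under it $\alpha\mapsto[\widetilde F]+(m+[\widetilde F])t=[\widetilde E]$ by the relation $a-b=(m+b)t$, and the identity descends to
\begin{equation*}
p_{\lfloor d/2\rfloor}([\widetilde E])-p_{\lfloor d/2\rfloor}([\widetilde F])=m^{\lfloor d/2\rfloor}\,g\qquad\text{in }\widetilde K^0(X),
\end{equation*}
which is exactly the asserted divisibility by $m^{\lfloor d/2\rfloor}$. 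The only genuinely delicate point in this program is the passage from the rational to the integral identity; once it is recast as a polynomial identity over $\ZZZ$ modulo high-degree monomials, torsion plays no role.
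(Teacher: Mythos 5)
Your proposal is correct and takes essentially the same route as the paper: Theorem~\ref{Thm:complete-obstruction_O_E} yields $[\widetilde E]=[\widetilde F]+mt+[\widetilde F]t$, the skeletal (Atiyah--Hirzebruch) filtration yields $\widetilde K^0(X)^{\lfloor d/2\rfloor+1}=\{0\}$, and the truncated-logarithm identity then gives the divisibility. Your universal ring $R=\ZZZ[\beta,\tau]/J$ is the paper's Lemma~\ref{lemma:ABC}(ii) in disguise: there the error term $s_{n+1}\in\ZZZ[x,y]$, all of whose monomials have degree $\geq n+1$ and whose coefficients are integral because it is a difference of integer polynomials, plays exactly the role of your monomial ideal $J$ and settles integrality the same way.
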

For $x\in \mathbb{R}$, we set
 $\lfloor x \rfloor :=\max\{k\in \ZZZ: k\leq x\}$ and $\lceil x\rceil:=\min\{k\in \ZZZ: k\geq x\}$. Theorem~\ref{thm:k-theory-inv}
extends to finite dimensional compact metrizable spaces: if $n\geq 1$ is an
integer such that  $\widetilde K^0(X)^{n+1}=\{0\}$,
then $p_{n}([\widetilde E])-p_{n}([\widetilde F])$ is divisible by $m^{n}$ in $\widetilde K^0(X)$ whenever $O_E\cong O_F$ as $C(X)$-algebras. The same conclusion holds for infinite dimensional spaces
$X$ but in that case  $n$ depends on $E$ and $F$.

Concerning the completeness of the above invariant we have the following:
\begin{theorem}\label{thm:main_result2-intro} Let $X$ be a  finite  CW complex of dimension $d$.
Suppose that $m$ and $\lfloor d/2 \rfloor!$ are relatively prime and that $\Tor(H^*(X,\ZZZ),\ZZZ/m)=0$. If $E,F\in Vect_{m+1}(X)$,
then  $O_E\cong O_F$ as $C(X)$-algebras if and only if  $p_{\lfloor d/2 \rfloor}([\widetilde  E])-p_{\lfloor d/2 \rfloor}([\widetilde F])$ is divisible by $m^{\lfloor d/2 \rfloor}$ in $\widetilde K^0(X)$.
\end{theorem}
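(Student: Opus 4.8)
Write $N=\lfloor d/2\rfloor$, $a=[\widetilde E]$ and $b=[\widetilde F]\in\widetilde K^0(X)$. The plan is to treat the two implications separately. The necessity of the divisibility condition is precisely Theorem~\ref{thm:k-theory-inv}, so no new work is needed there. For the converse I would first translate the isomorphism problem into a divisibility statement in $K^0(X)$: since $E$ and $F$ have rank $m+1$ we have $1-[E]=-(m+a)$ and $1-[F]=-(m+b)$, so by Theorem~\ref{Thm:complete-obstruction_O_E} it suffices to produce $h\in K^0(X)$ of virtual rank one with $(m+b)h=m+a$. Throughout I would use that $\widetilde K^0(X)^{N+1}=0$ (a product of $N+1$ virtual-rank-zero classes lies in $F^{2(N+1)}K^0(X)\subseteq F^{d+1}K^0(X)=0$), so that all the power series below terminate.

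The guiding idea is the logarithm. Setting $L(x):=\sum_{k=1}^N(-1)^{k-1}x^k/(k\,m^k)$, one has the polynomial identity $p_N(x)=\ell(N)\,m^N\,L(x)$, while in $\widetilde K^0(X)\otimes\QQQ$ the class $L(x)$ equals $\log(1+x/m)$ and $\exp(L(a)-L(b))=(m+a)/(m+b)$. Thus the only possible candidate is $h:=(m+a)(m+b)^{-1}\in K^0(X)\otimes\QQQ$, which has virtual rank one; the whole problem is to show that $h$ is integral and satisfies $(m+b)h=m+a$ in $K^0(X)$ itself. I would establish this by checking local integrality one prime at a time.

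For a prime $p\nmid m$, the element $m$ is a unit in $\ZZZ_{(p)}$ and $b$ is nilpotent, so $m+b$ is a unit in $K^0(X)\otimes\ZZZ_{(p)}$; hence $h_{(p)}:=(m+b)^{-1}(m+a)$ lies in $K^0(X)\otimes\ZZZ_{(p)}$ and satisfies the equation there. For a prime $p\mid m$ I would use the hypothesis: from $p_N(a)-p_N(b)=m^N z$ with $z\in\widetilde K^0(X)$ and the identity $p_N=\ell(N)m^N L$, cancelling $m^N$ in $\widetilde K^0(X)\otimes\QQQ$ gives $L(a)-L(b)=z/\ell(N)$. Since $\gcd(m,N!)=1$ and $\ell(N)\mid N!$, the integer $\ell(N)$ is a unit in $\ZZZ_{(p)}$, so $z/\ell(N)\in\widetilde K^0(X)\otimes\ZZZ_{(p)}$; and because $p\nmid j!$ for $j\le N$, the truncated exponential $h_{(p)}:=\exp_{[N]}(z/\ell(N))=\sum_{j=0}^N(z/\ell(N))^j/j!$ is a well-defined $p$-integral class equal to $h$ over $\QQQ$. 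To pass the relation $(m+b)h_{(p)}=m+a$, which holds over $\QQQ$, down to $K^0(X)\otimes\ZZZ_{(p)}$, I would invoke the second hypothesis: $\Tor(H^*(X,\ZZZ),\ZZZ/m)=0$ forces $K^0(X)$ to have no $m$-torsion (via the Atiyah--Hirzebruch spectral sequence), so $K^0(X)\otimes\ZZZ_{(p)}\to K^0(X)\otimes\QQQ$ is injective for $p\mid m$.

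Finally I would assemble the local solutions. Let $\phi\colon K^0(X)\to K^0(X)$ be multiplication by $m+b$. The previous step shows $m+a\in\Image(\phi\otimes\ZZZ_{(p)})$ for every prime $p$, so the class of $m+a$ dies in $\mathrm{coker}(\phi)\otimes\ZZZ_{(p)}$ for all $p$; since $\mathrm{coker}(\phi)$ is finitely generated and $\bigcap_p\Ker(C\to C\otimes\ZZZ_{(p)})=0$ for any finitely generated abelian group $C$, the class of $m+a$ vanishes in $\mathrm{coker}(\phi)$, producing $h\in K^0(X)$ with $(m+b)h=m+a$. A rank count gives $\mathrm{rank}(h)=1$, and Theorem~\ref{Thm:complete-obstruction_O_E} then yields $O_E\cong O_F$. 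I expect the prime case $p\mid m$ to be the main obstacle: it is exactly here that the logarithmic candidate must be made integral, and it is the single point where both hypotheses are indispensable --- $\gcd(m,N!)=1$ to keep the coefficients of $\log$ and $\exp$ $p$-integral, and the absence of $m$-torsion to descend the defining relation from $\QQQ$ to $\ZZZ_{(p)}$.
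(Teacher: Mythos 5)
Your proof is correct, but the sufficiency direction follows a genuinely different route from the paper's. The paper reduces, via Theorem~\ref{Thm:complete-obstruction_O_E}, to showing $[\widetilde E]\sim[\widetilde F]$ and then proves this by a purely algebraic induction (Lemma~\ref{lemma:C}) on the length of the filtration of $R=\widetilde K^0(X)$ by the skeleton-filtration ideals $K^0_{2q}(X)$: at each step it passes to $R/R_{n+1}$, uses the recurrence $p_{n+1}(x)=\frac{\ell(n+1)}{\ell(n)}mp_n(x)+(-1)^n\frac{\ell(n+1)}{n+1}x^{n+1}$ together with Lemma~\ref{lemma:ABC}, and absorbs the error term $r_{n+1}\in R_{n+1}$ using $\Tor(R_k/R_{k+1},\ZZZ/m)=0$ and $\gcd(m,n!)=1$. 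You instead work rationally: you identify the unique candidate $h=(m+a)(m+b)^{-1}=\exp_{[N]}(z/\ell(N))$ through the formal $\log$/$\exp$ correspondence for nilpotents (which also explains conceptually where the polynomials $p_n$ come from), verify $p$-integrality prime by prime --- trivially for $p\nmid m$, and for $p\mid m$ using coprimality to make the coefficients of $\log$ and $\exp$ into units of $\ZZZ_{(p)}$ plus absence of $m$-torsion in $K^0(X)$ to descend the relation $(m+b)h_{(p)}=m+a$ from $\QQQ$ to $\ZZZ_{(p)}$ --- and then glue with a standard local-global argument in $\mathrm{coker}$ of multiplication by $m+b$. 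All the individual steps check out: $\widetilde K^0(X)^{N+1}=0$ makes the series terminate, $\ell(N)\mid N!$ and $p\mid m$ give the needed units, right-exactness of $-\otimes\ZZZ_{(p)}$ and $\bigcap_p\Ker\bigl(C\to C\otimes\ZZZ_{(p)}\bigr)=0$ justify the assembly, and the rank count forces $h$ to have virtual rank one. Both arguments funnel the cohomological hypothesis through the Atiyah--Hirzebruch spectral sequence, but the paper needs the (formally stronger) statement $\Tor(K^0_{2q}(X)/K^0_{2q+2}(X),\ZZZ/m)=0$ for the graded pieces to run its induction, whereas you only need that $K^0(X)$ itself has no $p$-torsion for $p\mid m$. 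What the paper's approach buys is a self-contained, integral lemma valid in any filtered commutative ring (and reused in spirit by Lemma~\ref{countK0}); what yours buys is brevity and transparency --- the candidate $h$ is produced explicitly rather than coaxed out degree by degree, and the role of each hypothesis (coprimality for integrality of the series coefficients, torsion-freeness for rational descent) is cleanly separated.
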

The condition that  $m$ and $\lfloor d/2 \rfloor!$ are relatively prime is necessary.
To show this, we take $m=2$ and let $X$ be
the complex projective space  $\CCC \mathrm{P}^2$.  Then $K^0(X)$ is isomorphic to the polynomial ring $\ZZZ[x]$
with $x^3=0$, \cite{Kar:k-theory}. Let $E$ and $F$ be bundles with K-theory classes  $[E]=3+3x$ and $[F]=3+x$.
Then $ [\widetilde E]=3x$ and $[\widetilde F]=x$, so that $p_2(3x)-p_2(x)=8(x-x^2)$  is divisible by $4$ and yet
 Theorem~\ref{thm:main_result-intro2} shows that $O_E\ncong O_F$,  since $([\widetilde E]-[\widetilde F])(2-[\widetilde F])=4x-2x^2$ is not divisible
by $4$. The vanishing of $m$-torsion is also necessary in both Theorems~\ref{thm:k-theory-inv} and~\ref{thm:main_result2-intro} as it is seen by taking $m=2$ and $X=\mathbb{R}\mathrm{P}^2\vee \CCC \mathrm{P}^2$, where $\mathbb{R}\mathrm{P}^2$ is the real projective space. Indeed, let
$E,F\in \mathrm{Vect}_3(X)$ be such that $F$ is trivial and
$[\widetilde E]|_{\mathbb{R}\mathrm{P}^2}=z$
is the generator of $\widetilde K^0(\mathbb{R}\mathrm{P}^2)=\ZZZ/2$ and
$[\widetilde E]|_{\mathbb{C}\mathrm{P}^2}=2x+2x^2$. Then
$([\widetilde E]-[\widetilde F])(2-[\widetilde F])=(z+2x+2x^2)(2)=4x+4x^2$ is divisible by $4$ and yet
$O_E\ncong C(X)\otimes O_3$ by Theorem~\ref{Thm:complete-obstruction_O_E} since $[E]-1=2+z+2x+2x^2$ is not divisible by $2$.

 Next we exhibit characteristic classes of $E$ which are invariants of $O_E$. For each $n\geq 1$ consider the  polynomial $q_n\in \ZZZ[x_1,...,x_n]$:
\begin{equation}\label{qn}
 q_n=\sum_{ k_1+2k_2+\dots+nk_n=n}(-1)^{k_1+\dots+k_n-1} m^{n-(k_1+\cdots+k_n)}\frac{n!\,(k_1+\dots+k_n-1)!}{1!^{k_1}\cdots{n!}^{k_n}\,k_1!\cdots k_n!}\,x_1^{k_1}\cdots x_n^{k_n}.
 \end{equation}
Thus $q_1(x_1)=x_1$, $q_2(x_1,x_2)=mx_2-x_1^2$, $q_3(x_1,x_2,x_3)=m^2x_3-3mx_1x_2+2x_1^3$, etc.
Let $ch_n$ be the integral characteristic classes that appear in the Chern character,
$ch=\sum_{n\geq 0} \frac{ 1}{n!}ch_n$.
\begin{theorem}\label{thm:cohomology-inv}
   Let $X$ be a  compact metrizable space  and let $E,F \in \Vect_{m+1}(X)$.
If $O_E\cong O_F$  as $C(X)$-algebras,  then
$q_n(ch_1(E),...,ch_n(E))-q_n(ch_1(F),...,ch_n(F))$ is divisible by $m^{n}$ in $H^{2n}(X,\ZZZ)$, for all $n\geq 1$.
 \end{theorem}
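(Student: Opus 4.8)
The plan is to reduce the statement to the isomorphism criterion of Theorem~\ref{Thm:complete-obstruction_O_E} and then transport the resulting multiplicative relation in $K^0(X)$ into cohomology via the Chern character, isolating the component of degree $2n$. Concretely, if $O_E\cong O_F$ as $C(X)$-algebras, Theorem~\ref{Thm:complete-obstruction_O_E} furnishes $h\in K^0(X)$ of virtual rank one with $1-[E]=(1-[F])h$, equivalently $[E]-1=([F]-1)h$. Since $\mathrm{rank}([E]-1)=\mathrm{rank}([F]-1)=m$ and $\mathrm{rank}(h)=1$, I would apply the Chern character $ch\colon K^0(X)\to \prod_{k\ge 0} H^{2k}(X,\QQQ)$, which is a ring homomorphism, to obtain $ch([E]-1)=ch([F]-1)\,ch(h)$. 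Dividing by $m$ puts this in the form $u_E=u_F\,ch(h)$, where $u_E:=m^{-1}ch([E]-1)$ and $u_F:=m^{-1}ch([F]-1)$ each have component $1$ in degree $0$ (note $ch_k([E]-1)=ch_k(E)$ for $k\ge 1$), and $ch(h)$ likewise has degree-$0$ component $1$.

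Because all three classes have unit component in degree $0$, their formal logarithms are well defined in $\prod_{k\ge 1}H^{2k}(X,\QQQ)$, computed degree by degree; no finiteness or nilpotency hypothesis on $X$ is needed, since the degree-$2n$ component of $\log(1+w)$ involves only the finitely many terms $w,\dots,w^{n}$. Taking logarithms in $u_E=u_F\,ch(h)$ yields $\log u_E-\log u_F=\log ch(h)$. The key bookkeeping step, which I would isolate as a lemma, is the identity
\begin{equation*}
 q_n\big(ch_1(E),\dots,ch_n(E)\big)=n!\,m^{n}\,\big[\log u_E\big]_{2n},
\end{equation*}
where $[\ \cdot\ ]_{2n}$ denotes the component of degree $2n$. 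This follows by expanding $\log u_E=\sum_{j\ge 1}\frac{(-1)^{j-1}}{j}(u_E-1)^j$ with $u_E-1=m^{-1}\sum_{k\ge 1}\frac{1}{k!}ch_k(E)$, grouping the degree-$2n$ contributions by the partition recording how many factors of each $ch_k(E)$ occur, and matching the resulting multinomial coefficients term-by-term against the defining expression~\eqref{qn} for $q_n$; the factor $n!$ converts $\frac{(j-1)!}{\prod(i!)^{k_i}\prod k_i!}$ into the coefficient $\frac{n!\,(j-1)!}{\prod(i!)^{k_i}\prod k_i!}$ appearing in $q_n$ (I would sanity-check this against $q_1(x_1)=x_1$ and $q_2=mx_2-x_1^2$).

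Combining the two previous steps and subtracting, writing $ch_\bullet(E)$ for $(ch_1(E),\dots,ch_n(E))$, I obtain
\begin{equation*}
 q_n\big(ch_\bullet(E)\big)-q_n\big(ch_\bullet(F)\big)=n!\,m^{n}\,\big[\log ch(h)\big]_{2n}=m^{n}\cdot\Big(n!\,\big[\log ch(h)\big]_{2n}\Big),
\end{equation*}
so it remains to check that $n!\,[\log ch(h)]_{2n}$ is an integral class. Expanding $\log ch(h)$ as above, its degree-$2n$ component is $\sum(-1)^{j-1}\frac{(j-1)!}{\prod(i!)^{k_i}\prod k_i!}\prod_i ch_i(h)^{k_i}$, summed over partitions with $\sum_i i\,k_i=n$ and $j=\sum_i k_i$. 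Multiplying by $n!$ gives a $\ZZZ$-linear combination of the integral monomials $\prod_i ch_i(h)^{k_i}$, since each coefficient $\frac{n!\,(j-1)!}{\prod(i!)^{k_i}\prod k_i!}$ is an integer: indeed $\frac{n!}{\prod(i!)^{k_i}\prod k_i!}$ counts the set partitions of $\{1,\dots,n\}$ of the corresponding type. Hence $n!\,[\log ch(h)]_{2n}\in H^{2n}(X,\ZZZ)$ and the displayed difference is divisible by $m^{n}$, proving the theorem.

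The points requiring care are the combinatorial identity of the second paragraph and the validity of the formal manipulations. For the former, one must verify term-by-term that the truncation built into passing to a single cohomological degree matches the truncation index $[n]$ of~\eqref{def:pn}. For the latter, I would justify that $ch$ is a ring homomorphism for an arbitrary compact metrizable $X$ by writing $X$ as an inverse limit of finite complexes and invoking continuity of $K$-theory and of \v{C}ech cohomology; note that, unlike the $K$-theoretic Theorems~\ref{thm:k-theory-inv} and~\ref{thm:main_result-intro2}, no nilpotency of $\widetilde K^0(X)$ is needed here, since each fixed degree $2n$ already receives only finitely many contributions. I expect the genuine content to be exactly the integrality of the set-partition coefficients used in the final step, which is precisely where the assertion $q_n\in\ZZZ[x_1,\dots,x_n]$ is being invoked.
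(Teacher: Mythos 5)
Your overall strategy is the same as the paper's (invoke Theorem~\ref{Thm:complete-obstruction_O_E}, linearize the multiplicative relation by a formal logarithm, and match coefficients against \eqref{qn}), and your combinatorial identification of $q_n$ with $n!\,m^n$ times the degree-$2n$ component of the logarithm, as well as your set-partition integrality argument (equivalent to Lemma~\ref{lemma:integralcoeff}), are correct. However, there is a genuine gap at the final step. Every identity you write --- $u_E=u_F\,ch(h)$, the logarithms, and $q_n(ch_\bullet(E))-q_n(ch_\bullet(F))=m^n\bigl(n!\,[\log ch(h)]_{2n}\bigr)$ --- lives in $\prod_k H^{2k}(X,\QQQ)$, because dividing by $m$ and by the integers $j$ in the log series is only possible rationally. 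At the end you have an equality between images of integral classes under the coefficient map $H^{2n}(X,\ZZZ)\to H^{2n}(X,\QQQ)$, whose kernel is exactly the torsion subgroup. So what you have actually proved is that $q_n(ch_\bullet(E))-q_n(ch_\bullet(F))-m^n z$ is a torsion class for a suitable integral $z$, i.e.\ divisibility by $m^n$ \emph{modulo torsion}. The theorem asserts divisibility in $H^{2n}(X,\ZZZ)$ with no torsion hypothesis whatsoever, and in the presence of $m$-torsion the two statements differ: in a summand $\ZZZ/m$ every element maps to zero rationally, yet $m^n\cdot(\ZZZ/m)=0$, so only the zero element is divisible by $m^n$. (Torsion is not a marginal case here; the paper's own counterexamples elsewhere use $\mathbb{R}\mathrm{P}^2$, precisely to exploit $2$-torsion.)

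The fix is to perform your logarithm computation at the level of polynomials with integer coefficients in formal weighted variables, and substitute cohomology classes only at the very end --- which is exactly what the paper does. Multiplying Lemma~\ref{lemma:ABC}(ii) by $n!/\ell(n)$ gives $W_n(y+mh+yh)-W_n(y)=m^n\,n!\,V_n(h)+\frac{n!}{\ell(n)}s_{n+1}(y,h)$ with the remainder having all monomials of degree $\geq n+1$; substituting $y=\sum_k y_k/k!$, $h=\sum_k h_k/k!$ and extracting weight-$n$ components yields the identity
\begin{equation*}
q_n\Bigl(y_1+mh_1,\;\dots\;,\;y_n+mh_n+\sum_{i+j=n}\tfrac{n!}{i!j!}y_ih_j\Bigr)-q_n(y_1,\dots,y_n)=m^n\,r_n(h_1,\dots,h_n)
\end{equation*}
in $\ZZZ[y_\bullet,h_\bullet]$. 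Verifying this identity rationally is harmless because the polynomial ring is torsion-free --- unlike $H^*(X,\ZZZ)$, which is where your argument loses information. One then substitutes the \emph{integral} classes $y_k=s_k(F)$, $h_k=s_k(H)$ (where $[\widetilde E]=[\widetilde F]+mH+[\widetilde F]H$) and uses the integral multiplicativity \eqref{s-classes} of the $s$-classes to recognize the first argument list as $s_k(E)$; the resulting equality $q_n(s_\bullet(E))-q_n(s_\bullet(F))=m^n\,r_n(s_\bullet(H))$ then holds in $H^{2n}(X,\ZZZ)$, giving the asserted integral divisibility. In short, your computation is the right one, but it must be kept formal and integer-coefficient until the final substitution; as written, it proves the theorem only modulo torsion.
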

Reducing mod $m^n$ it follows that the sequence ${q}_n(\dot{ch}_1(E),...,\dot{ch}_n(E))\in H^{2n}(X,\ZZZ/m^n)$, $n\geq 1$, is an invariant of  the $C(X)$-algebra $O_E$.

Let us denote by $\OOO_{m+1}(X)$ the set of isomorphism classes
of unital separable $C(X)$-algebras with all fibers isomorphic to $O_{m+1}$.  In the second part of the paper we study the range of the map
$\Vect_{m+1}(X)\to \OOO_{m+1}(X)$.
 This relies on the computation
of the homotopy groups of $\Aut(O_{m+1})$ of \cite{Dad:bundles-fdspaces}. If $T$ is a set, we denote by $|T|$ its cardinality.

\begin{theorem}\label{thm:main_result-intro1} Let $X$ be a finite CW complex  of dimension $d$. Suppose that  $m\geq \lceil (d-3)/2\rceil$ and  $\Tor(H^*(X,\ZZZ),\ZZZ/m)=0$. Then
each element of $\OOO_{m+1}(X)$
  is isomorphic to  $O_E$ for some $E$ in $Vect_{m+1}(X)$. Moreover $|\OOO_{m+1}(X)|=|\widetilde K^0(X)\otimes \ZZZ/m|=|\widetilde H^{even}(X,\ZZZ/m)|$.
\end{theorem}
The hypotheses of Theorem~\ref{thm:main_result-intro1} are necessary. Indeed, to see that the condition
$m\geq \lceil(d-3)/2\rceil$ is necessary even in the absence of torsion, we note that $Vect_3(S^8)=\{*\}$ since $\pi_7(U(3))=0$ by
\cite{Kervaire:homotopy-unitary}, whereas $\OOO_{3}(S^8)\cong \pi_7(\Aut(O_3))\cong\ZZZ/2$ by \cite{Dad:bundles-fdspaces}.
To see that the condition on torsion is necessary  when $m\geq \lceil(d-3)/2\rceil$,
we note that if $X=\mathbb{R}\mathrm{P}^2$, then $Vect_3(SX)=\{*\}$ since $\widetilde K^0(SX)\cong K^1(X)=\{0\}$ and $\dim(SX)=3$, whereas $\OOO_{3}(SX)\cong
K^1(X,\ZZZ/2)\cong \ZZZ/2$ by \cite{Dad:bundles-fdspaces}.

 The study of the map $Vect_{m+1}(X)\to \OOO_{m+1}(X)$ simplifies considerably if $X$ is a suspension
 as explained in Theorem~\ref{thm:susp} from Section~\ref{section:susp}.

  In Section~\ref{section:classification} we prove Theorems~\ref{Thm:complete-obstruction_O_E} -~\ref{thm:main_result-intro2}.
Theorem~\ref{thm:k-theory-inv} is proved in Section~\ref{section:2} and
Theorem~\ref{thm:main_result2-intro} is proved in Section~\ref{section:2algebra}.
The proofs of Theorem~\ref{thm:cohomology-inv} and Theorems~\ref{thm:main_result-intro1} are given in Section~\ref{section:2+} and respectively Section~\ref{section:3}.

Cuntz-Pimsner algebras come with a natural $\mathbb{T}$-action and hence with a $\ZZZ$-grading. The  question studied by Vasselli in \cite{Vasselli:1}
of when $O_E$ and $O_F$ are isomorphic as $\ZZZ$-graded  $C(X)$-algebras is not directly related to the questions
 addressed in this paper. I would like to thank Ezio Vasselli for making me aware of the isomorphism
$O_E\cong O_{\bar{E}}$ for line bundles, see \cite{Vasselli:2}.
 \section{When is $O_E$ isomorphic to $O_F$?}\label{section:classification}
 In this section we prove Theorems~\ref{Thm:complete-obstruction_O_E}-\ref{thm:main_result-intro2} and  discuss the case of line bundles.

 \begin{proof} (of Theorem~\ref{Thm:complete-obstruction_O_E}) We identify $K_0(C(X))$ with $K^0(X)$. Let $\iota_E$ denote the canonical unital inclusion    $C(X)\to O_E$.
 By \cite{Pim:CK-alg}, the K-theory group $K_0(O_E)$ fits into an exact sequence
\[
\xymatrix{
K_0(C(X))\ar[r]^{1-[E]}&K_0(C(X))\ar[r]^-{(\iota_E)_*}&K_0(O_E),
}
\]
where $1-[E]$  corresponds to the multiplication map  by the element $1-[E]$.
Therefore $\ker (\iota_E)_*=(1-[E])K^0(X)$.
Suppose that $\phi:O_E\to O_F$ is a $C(X)$-linear unital $*$-homomorphism.
Then $\phi \circ \iota_E=\iota_F$ and hence $\ker (\iota_E)_*\subset \ker (\iota_F)_*$.
It follows that  $(1-[E])K^0(X)\subset (1-[F])K^0(X)$ and hence
$1-[E]=(1-[F])h$ for some $h\in K^0(X)$. If $\phi$ is an
isomorphism, we deduce similarly that $1-[F]=(1-[E])k$ for some $k\in K^0(X)$. In that case
 $rank(E_x)=rank(F_x)$ for each $x\in X$ and $h$ must have constant virtual rank
equal to one.

Conversely, suppose that there is $h\in K^0(X)$  such that $(1-[E])=(1-[F])h$.
We have $O_E \otimes O_\infty \cong O_E$ and  $O_F \otimes O_\infty \cong O_F$  by \cite{BK}.
We are going to show the existence of a unital $C(X)$-linear embedding
$O_E\subset O_F$ by producing an element
$\chi\in KK_X(O_E,O_F)$
which maps $[1_{O_E}]$ to $[1_{O_F}]$ and then appeal to \cite{Kir:Michael}.
If the virtual rank of $h$
is equal to one and hence $h$ is invertible in the ring $K^0(X)$, we show that
 $\chi$ is a $KK_X$-equivalence and that will imply
that $O_E$ is isomorphic to
$O_F$.
Since the operation of suspension is an isomorphism in $KK_X$, it suffices to show that
there is $\eta\in KK_X(SO_E,SO_F)$, respectively $\eta\in KK_X(SO_E,SO_F)^{-1}$,
such that $\eta \circ [S\iota_E]=[S\iota_F]$.

Let us recall that the mapping cone
of a $*$-homomorphism $\alpha:A\to B$
is
\[C_{\alpha}=\{(f,a)\in C([0,1],B)\oplus A: f(0)=\alpha(a), f(1)=0\}. \]
If $\alpha$ is a morphism of continuous $C(X)$-algebras, then
the natural extension
\[
\xymatrix{
0\ar[r] & SB\ar[r]^{\lambda} \ar[r]& C_\alpha
\ar[r]^{p}& A\ar[r]&0,
}
\]
where $\lambda(f)=(f,0)$ and $p(f,a)=a$, is an extension of continuous $C(X)$-algebras.

Let $\mathrm{KK}(X)$ denote the additive category with objects  separable $C(X)$-algebras and
morphisms from $A$ to $B$ given by the group $KK_X(A,B)$.
Nest and Meyer have shown that $\mathrm{KK}(X)$ is a triangulated category \cite{Meyer-Nest:BC}.
In particular, for any diagram of separable $C(X)$-algebras and $C(X)$-linear $*$-homomorphisms
\[
\xymatrix{
SB \ar[r]^{\lambda}\ar[d]_{S\varphi} & C_{\alpha}\ar@{-->}[d]^{\gamma} \ar[r]^{p} & A \ar[r]^{\alpha}\ar[d]_{\psi} & B\ar[d]_{\varphi}
\\
SB' \ar[r]^{\lambda'} & C_{\alpha'} \ar[r]^{p} & A' \ar[r]^{\alpha'} & B'
}
\]
such that the right square commutes in $\mathrm{KK}(X)$,  there is $\gamma\in KK_X(C_\alpha,C_{\alpha'})$ that makes to remaining squares commute. If the right square commutes up to homotopy of $C(X)$-linear $*$-homomorphisms, then
$\gamma$ can be chosen to be a  $C(X)$-linear $*$-homomorphism, see \cite[Prop. 2.9]{Sc:axiom-homology}. The general case is proved in a similar way, see \cite[Appendix A]{Meyer-Nest:BC}.
Let us note that if $\varphi$ and $\psi$ are $KK_X$-equivalences, so is $\gamma$ by the exactness of the Puppe sequence and the five-lemma.

We need another general observation.
If
\[
\xymatrix{
0\ar[r] & J\ar[r]^{j} \ar[r]& B
\ar[r]^{\pi}& B/J\ar[r]&0
}
\]
is an extension of separable  continuous $C(X)$-algebras, then there is a surjective $C(X)$-linear
$*$-homomorphism $\mu:C_j\to SB/J$, $\mu(f,b)=\pi\circ f$, and hence
 an extension
of separable continuous $C(X)$-algebras
\[
\xymatrix{
0\ar[r] & CJ \ar[r]& C_j
\ar[r]^-{\mu}& SB/J\ar[r]&0.
}
\]
If $B/J$ is nuclear then it is $C(X)$-nuclear and since $CJ$ is $KK_X$-contractible it follows that $\mu$ must be a $KK_X$-equivalence, see \cite{Bauval:KKX}.

Let us recall that $O_E$ is defined by the extension
\(
\xymatrix{
0\ar[r] &\mathcal{K}(\mathcal{E})\ar[r]^{j_E}&T_E\ar[r]^{\pi_E}\ar[r]&0.
}
\)

 By Theorem 4.4 and Lemma 4.7 of \cite{Pim:CK-alg}
there is a commutative diagram  in $\mathrm{KK}(X)$:
\[
\xymatrix{
\mathcal{K}(\mathcal{E})\ar[d]_{[\mathcal{E}]}\ar[r]^{[j_E]}&T_E
\\
C(X)\ar[r]^{[id]-[E]} &C(X)\ar[u]_{[i_E]}
}
\]
where both vertical maps are $KK_X$-equivalences. Here $i_E$ is the canonical unital inclusion, $[\mathcal{E}]$ is the class in $KK_X(\mathcal{K}(\mathcal{E}),C(X))$ defined by the  bimodule $ \mathcal{E}$ that implements the strong Morita equivalence between $\mathcal{K}(\mathcal{E})$ and $C(X)$ and $[E]$ is the class in $KK_X(C(X),C(X))$ defined by the finitely generated  $C(X)$-module $\Gamma(E)$. Note that
\([id]-[E]\in KK_X(C(X),C(X))\) induces the multiplication map $K^0(X)\stackrel{1-[E]}{\longrightarrow} K^0(X)$.
Pimsner's statements refer to ordinary $KK$-theory but his constructions and arguments are natural
and  preserve the $C(X)$-structure.

After tensoring the  C*-algebras in the above diagram by $O_\infty$ we can realize
$[\mathcal{E}]^{-1}$ and $[id]-[E]$ as $KK_X$ classes of $C(X)$-linear  $*$-homomorphisms $\psi_E$ and respectively $\alpha_E$.
This is easily seen by using the identification $KK_X(C(X),B)\cong KK(\CCC,B)$ for $B$ a $C(X)$-algebra and noting that
 $\mathcal{K}(\mathcal{E})$ contains a full projection since $\mathcal{E}$ is isomorphic
to $C(X)\otimes \ell^2(\mathbb{N})$ by Kuiper's theorem. Thus we obtain a diagram
\[
\xymatrix{
\mathcal{K}(\mathcal{E})\otimes O_\infty \ar[r]^{j_E\otimes id}&T_E\otimes O_\infty
\\
C(X)\otimes O_\infty\ar[u]^{\psi_E}\ar[r]^-{\alpha_E} &C(X)\otimes O_\infty\ar[u]_{i_E\otimes id}
}
\]
that commutes in $\mathrm{KK}(X)$.
We  construct a similar  diagram for the bundle $F$.
Let $H:C(X)\otimes O_\infty \to C(X)\otimes O_\infty$ be a  $C(X)$-linear $*$-homomorphism
which sends $[1_{C(X)\otimes O_\infty}]$ to $h\in K^0(X)\cong K_0(C(X)\otimes O_\infty)$. Note that $H$ is a $KK_X$-equivalence whenever $h$
is invertible in the ring $K^0(X)$.
Since $1-[E]=(1-[F])h$ by assumption,
 the  diagram
\[
\xymatrix{
C(X)\otimes O_\infty\ar[d]_{H}\ar[r]^-{\alpha_E} &C(X)\otimes O_\infty
\\
C(X)\otimes O_\infty\ar[r]^-{\alpha_F} &C(X)\otimes O_\infty\ar@{=}[u]
}
\]
commutes in the category $\mathrm{KK}(X)$.
The  proof of the theorem is based on the following commutative diagram in $\mathrm{KK}(X)$:
\[
\xymatrix{
&SO_E\otimes O_\infty &&&\\
ST_E\otimes O_\infty \ar[r]^{\lambda_E}& C_{j_E}\otimes O_\infty\ar[u]^{\mu_E\otimes id}\ar[r] &\mathcal{K}(\mathcal{E})\otimes O_\infty\ar[r]^{j_E\otimes id}&T_E\otimes O_\infty
\\
SC(X)\otimes O_\infty \ar[u]^{Si_E\otimes id}\ar@{=}[d]\ar[r]& C_{\alpha_E}\ar[r]\ar@{-->}[d]^{\gamma}
\ar@{-->}[u]_{\gamma_E}
&C(X)\otimes O_\infty\ar[d]^{H}\ar[u]_{\psi_E} \ar[r]^{\alpha_E}&C(X)\otimes O_\infty \ar@{=}[d]\ar[u]_{i_E\otimes id}
\\
SC(X)\otimes O_\infty\ar[d]_{Si_F\otimes id} \ar[r]& C_{\alpha_F}\ar[r] \ar@{-->}[d]^{\gamma_F}&C(X)\otimes O_\infty \ar[d]^{\psi_F}\ar[r]^{\alpha_F}&C(X)\otimes O_\infty \ar[d]^{i_F\otimes id}
\\
ST_F\otimes O_\infty \ar[r]^{\lambda_F}& C_{j_F}\otimes O_\infty\ar[d]_{\mu_F\otimes id}\ar[r] &\mathcal{K}(\mathcal{F})\otimes O_\infty\ar[r]^{j_F\otimes id}&T_F\otimes O_\infty\\
&SO_F\otimes O_\infty &&&
}
\]
The elements $\gamma$, $\gamma_E$ and
$\gamma_F$ are constructed  as explained above since the category $\mathrm{KK}(X)$ is triangulated.
Moreover  $\gamma_E$ and
$\gamma_F$ are $KK_X$-equivalences since they are induced by the $KK_X$-equivalences
$i_E, \psi_E$ and $i_F,\psi_F$. Arguing similarly, we see that $\gamma$  is a $KK_X$-equivalence whenever
$h$ is invertible in the ring $K^0(X)$.
The morphisms $\mu_E$ and $\mu_F$ are associated to the Toeplitz extensions
$0\to \mathcal{K}(\mathcal{E})\to T_E\to O_E\to 0$ and respectively  $0\to \mathcal{K}(\mathcal{F})\to T_F\to O_F\to 0$ and they are also $KK_X$-equivalences as we argued earlier in the proof.
Let us note that $(\mu_E\otimes id)\circ \lambda_E\circ (Si_E\otimes id_{O_\infty})=S\iota_E\otimes id_{O_\infty}$
and $(\mu_F\otimes id)\circ \lambda_F\circ (Si_F\otimes id_{O_\infty})=S\iota_F\otimes id_{O_\infty}$ by a simple direct verification.
In this way we are able to find an element
 \[\eta:=[\mu_F\otimes id]\circ \gamma_F\circ \gamma \circ \gamma_E^{-1}\circ [\mu_E\otimes id]^{-1}\in KK_X(SO_E\otimes O_\infty,SO_F\otimes O_\infty)\cong KK_X(SO_E,SO_F)\]
  such that the following diagram
commutes in $\mathrm{KK}(X)$.

\[
\xymatrix{
{}&{SC(X)}\ar[dr]^{S\iota_F}\ar[dl]_{S\iota_E}&{}\\
{SO_E}\ar[rr]^{\eta}&{}&{SO_F}\\
}
\]
Unsuspending, we find
$\chi\in KK_X(O_E,O_F)$
which maps $[1_{O_E}]$ to $[1_{O_F}]$, since $\chi\circ [\iota_E]=[\iota_F]$.
If $h$ is invertible in $K^0(X)$, then $\chi$ is a $KK_X$-equivalence. It follows that
 $O_E\cong O_F$ as $C(X)$-algebras by applying
 Kirchberg's   isomorphism theorem \cite{Kir:Michael}.
 If $\chi$ is just a $KK_X$-element which preserves the classes of the units, we invoke again \cite{Kir:Michael} in order to lift
 $\chi$ to a unital $C(X)$-linear embedding $O_E\subset O_F$.
\end{proof}
Set $T_n(b):=\sum_{k=1}^{n} (-1)^{k-1}\,m^{n-k}\,b^{k-1}$. Then $(m+b)T_n(b)=m^n-(-b)^n$.
\begin{lemma}\label{lemma:B} Let $R$ be a  commutative  ring such that $R^{n+1}=\{0\}$ for some $n\geq 1$ and let $a,b\in R$.  If
there is $h\in R$ such that $a=b+mh+bh$, then
$(a-b)T_{n}(b)=m^nh$. Conversely, if  $(a-b)T_{n}(b)=m^nh$ for some $h\in R$, then $m^n(a-b-mh-bh)=0$.
\end{lemma}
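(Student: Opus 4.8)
The plan is to reduce both implications to the single polynomial identity $(m+b)T_n(b)=m^n-(-b)^n$ recorded just before the statement, and then to use the nilpotency hypothesis $R^{n+1}=\{0\}$ to discard the unwanted term $(-b)^n$. Throughout I read $R^{n+1}=\{0\}$ as saying that every product of $n+1$ elements of $R$ vanishes.

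For the forward implication, I would first rewrite the hypothesis $a=b+mh+bh$ as $a-b=(m+b)h$. Multiplying by $T_n(b)$ and using commutativity, then substituting the identity, gives
\[
(a-b)T_n(b)=h\,(m+b)T_n(b)=h\bigl(m^n-(-b)^n\bigr)=m^nh-(-1)^n\,h\,b^n.
\]
The decisive step is to notice that $h\,b^n$ is a product of $n+1$ elements of $R$ and therefore lies in $R^{n+1}=\{0\}$; hence the last term drops out and $(a-b)T_n(b)=m^nh$, exactly as claimed.

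For the converse, starting from $(a-b)T_n(b)=m^nh$ I would compute $m^n(a-b-mh-bh)$ directly. Writing $c=a-b$ and using $m^nh=cT_n(b)$ together with the same identity, I obtain
\[
m^n\bigl(c-(m+b)h\bigr)=m^nc-(m+b)\,m^nh=m^nc-c\,(m+b)T_n(b)=m^nc-c\bigl(m^n-(-b)^n\bigr)=(-1)^n\,c\,b^n.
\]
Once more $c\,b^n=(a-b)\,b^n$ is a product of $n+1$ elements of $R$, so it lies in $R^{n+1}=\{0\}$ and the entire expression vanishes, giving $m^n(a-b-mh-bh)=0$.

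I do not expect a genuine obstacle here: the computation is short and each direction is a one-line manipulation modulo the identity. The only point demanding care is the use of the nilpotency hypothesis, which is precisely what converts $m^n-(-b)^n$ into $m^n$ in the forward direction and kills the residual error $(-1)^n(a-b)b^n$ in the converse. It is worth emphasizing in the write-up that both residual terms, $h\,b^n$ and $(a-b)\,b^n$, are products of exactly $n+1$ factors, so that the argument uses only $R^{n+1}=\{0\}$ and nothing stronger.
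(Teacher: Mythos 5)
Your proof is correct and follows essentially the same route as the paper: both directions reduce to the identity $(m+b)T_n(b)=m^n-(-b)^n$ and then use $R^{n+1}=\{0\}$ to kill the residual terms $h\,b^n$ and $(a-b)\,b^n$. The only cosmetic difference is in the converse, where the paper multiplies the hypothesis by $(m+b)$ while you expand $m^n(a-b-mh-bh)$ directly; the algebra is the same.
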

\begin{proof} Suppose that $a-b=(m+b)h$ for some $h\in R$.
Then $(a-b)T_{n}(b)=(m^n-(-b)^n)h=m^nh-(-b)^nh$. Since $(-b)^nh\in R^{n+1}$ must vanish it follows that
$(a-b)T_{n}(b)=m^nh$.
 Conversely, suppose that $(a-b)T_{n}(b)=m^n h$ for some $h\in R$. Then
 $(a-b)T_{n}(b)(m+b)=m^n(m+b) h$ and hence $(a-b)(m^n-(-b)^n)=m^n(m+b) h$. But $(a-b)(-b)^n=0$ since
 $R^{n+1}=\{0\}$. Therefore $m^n(a-b-mh-bh)=0$.
 \end{proof}

We are now prepared to prove Theorem~\ref{thm:main_result-intro2}.

\begin{proof}
Since $\dim(X)\leq n$, we can embed $X$ in $\mathbb{R}^{2n+1}$ and then find a decreasing sequence $X_i$ of
polyhedra  whose intersection is $X$.
We have ${\widetilde K}^0(X_i)^{n+1}=\{0\}$ since $\dim(X_i)\leq 2n+1$ (see the next section for further discussion). It follows that
${\widetilde K}^0(X)^{n+1}=\{0\}$ since  $\widetilde K^0(X)\cong \varinjlim \widetilde K^0(X_i)$.
Let us write $[E]-1=m+a$ and $[F]-1=m+b$ where $a=[\widetilde E]$ and
$b=[\widetilde F]\in \widetilde K^0(X)$.
By  Theorem~\ref{Thm:complete-obstruction_O_E}, $O_E\cong O_F$ if and only if
 $[E]-1=([F]-1)(1+h)$ for some $h\in \widetilde K^0(X)$ and hence if and only if
 $a=b+mb+bh$ for some $h\in \widetilde K^0(X)$. With this observation we conclude the proof by applying Lemma~\ref{lemma:B}.
 \end{proof}

For a hermitian bundle $E$ we denote by $\bar{E}$ the conjugate bundle, by $E_0$ the set of all nonzero elements in $E$ and by $S(E)$
the unit sphere bundle of $E$. 

\begin{proposition}\label{line-bundles} Let $E$ and $F$ be hermitian line bundles over a path-connected compact  metrizable space $X$.
Then $O_E\cong O_F$ as $C(X)$-algebras if and only if  either $E\cong F$ or $E\cong \bar{F}$.
\end{proposition}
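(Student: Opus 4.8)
The plan is to reduce the statement to a question about circle bundles over $X$ and then to the first Chern class. By \cite{Vasselli:2}, for a line bundle $E$ the algebra $O_E$ is commutative with spectrum the circle bundle $S(E)$, and this identification is compatible with the projection onto $X$; thus $O_E\cong C(S(E))$ as $C(X)$-algebras, and likewise $O_F\cong C(S(F))$. By Gelfand duality in the fibered setting, a unital $C(X)$-linear $*$-isomorphism $O_E\cong O_F$ is the same as a homeomorphism $\phi\colon S(E)\to S(F)$ commuting with the two projections onto $X$. The \emph{if} direction is then immediate: an isomorphism $E\cong F$ induces one of the sphere bundles over $X$, while $S(\bar F)$ and $S(F)$ share the same underlying total space over $X$, so $E\cong\bar F$ also yields $S(E)\cong S(F)$ over $X$. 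It remains to prove the converse.

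So suppose $\phi\colon S(E)\to S(F)$ is a homeomorphism over $X$. For each $x\in X$ it restricts to a homeomorphism $\phi_x$ of the fibers, both homeomorphic to $\TTT$. Over a set $U$ that trivializes both bundles, $\phi$ takes the form $(x,z)\mapsto(x,\psi(x,z))$ with $z\mapsto\psi(x,z)$ a self-homeomorphism of $\TTT$; its topological degree is a continuous, hence locally constant, function of $x$. Since the transition functions of $S(E)$ and $S(F)$ lie in $U(1)$, the complex structures orient the fibers coherently, so being orientation-preserving is globally well defined, and because $X$ is path-connected the degree of $\phi_x$ is $+1$ for all $x$ or $-1$ for all $x$.

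In the orientation-preserving case $\phi$ is an isomorphism of oriented $\TTT$-bundles over $X$. Because the inclusion $SO(2)\hookrightarrow\mathrm{Homeo}^+(\TTT)$ is a homotopy equivalence, oriented circle bundles over the paracompact space $X$ reduce uniquely to principal $\TTT$-bundles and are thereby classified by their Euler class in $H^2(X,\ZZZ)$, which for $S(E)$ is $c_1(E)$; hence $c_1(E)=c_1(F)$ and therefore $E\cong F$, line bundles being classified by the first Chern class. If instead $\phi$ reverses orientations, I would compose it with fiberwise complex conjugation $S(F)\to S(\bar F)$ to obtain an orientation-preserving isomorphism $S(E)\cong S(\bar F)$; the previous case then gives $c_1(E)=c_1(\bar F)=-c_1(F)$, that is, $E\cong\bar F$. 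The main obstacle is precisely the orientation-preserving step: one must pass from a bare fiberwise homeomorphism to an isomorphism of the underlying complex line bundles, and I handle this through the homotopy equivalence $SO(2)\simeq\mathrm{Homeo}^+(\TTT)$ together with the \v{C}ech-cohomological classification of principal $\TTT$-bundles (via the exponential sequence), which is valid for compact metrizable $X$ and identifies the classifying class with $c_1$.
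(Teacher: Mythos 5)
Your proof is correct, and it takes a genuinely different route from the paper's in the converse direction. Both arguments begin the same way (Vasselli's identification $O_E\cong C(S(E))$ plus Gelfand duality over $X$) and both end by invoking the classification of line bundles over a compact Hausdorff space by $c_1$; the difference is how one gets from a fiber-preserving homeomorphism $\phi\colon S(E)\to S(F)$ to $c_1(E)=\pm c_1(F)$. The paper extends $\phi$ radially (``by homogeneity'') to a homeomorphism of pairs $\Phi\colon (E,E_0)\to (F,F_0)$ and applies naturality of the Thom class: since $H^2(E,E_0;\ZZZ)=\ZZZ u_E$ and $H^2(F,F_0;\ZZZ)=\ZZZ u_F$, the isomorphism $\Phi^*$ must send $u_F$ to $\pm u_E$, and the Euler-class description of $c_1$ gives $c_1(E)=\pm c_1(F)$ in one stroke, with no case analysis and nothing beyond Milnor--Stasheff and \v{C}ech cohomology of pairs. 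You instead argue fiberwise: the degree of $\phi_x$ is locally constant and the $U(1)$-transition functions orient the fibers coherently, so connectedness splits the problem into an orientation-preserving and an orientation-reversing case; the preserving case is settled by unique reduction of the structure group from $\mathrm{Homeo}^+(\TTT)$ to $SO(2)$, and the reversing case is folded back into the preserving one via conjugation. What your route buys is the explicit dichotomy — the sign of the fiberwise degree tells you which alternative, $E\cong F$ or $E\cong\bar F$, actually occurs — and a conceptual placement of the result in structure-group language. What it costs is that the reduction step is the delicate point for a general compact metrizable $X$: since $X$ need not be a CW complex, you cannot argue with weak homotopy equivalences of classifying spaces, and you need the numerable-bundle form of the theory, e.g.\ Dold's section extension theorem applied to the associated bundle with contractible fiber $\mathrm{Homeo}^+(\TTT)/SO(2)$ (contractible because $\mathrm{Homeo}^+(\TTT)$ deformation retracts onto $SO(2)$), to get existence and uniqueness of the reduction over a paracompact base. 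That fact is true and standard, so there is no gap, but it deserves a precise citation; the paper's Thom-class argument sidesteps this machinery entirely, which is what makes it work painlessly at the stated level of generality.
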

\begin{proof} Vasselli has shown that for a line bundle $E$,  $O_E\cong C(S(E))$ as $C(X)$-algebras \cite{Vasselli:2}. Therefore it suffices to show that
 there is a homeomorphism of sphere-bundles $\phi:S(E)\to S(F)$ if and only if  either $E\cong F$ or $E\cong \bar{F}$. The isomorphism $O_E\cong O_{\bar{E}}$ was noted in \cite{Vasselli:2}. One can argue as follows.
 The conjugate bundle $\bar{E}$ has the same underlying real  vector bundle as $E$ but with opposite complex structure; the identity map $E\to \bar{E}$ is conjugate linear. If we endow $\bar{E}$ with the conjugate hermitian metric
 it follows that the identity map is fiberwise norm-preserving and hence it identifies $S(E)$ with $S(\bar{E})$.
 
Conversely,
suppose that there is a homeomorphism of sphere-bundles $\phi:S(E)\to S(F)$. By homogeneity we
can extend $\phi$ to a fiber-preserving homeomorphism $\Phi:E \to F$ such that $\Phi(E_0)\subset F_0$.
Let $p_E:E\to X$ be the projection map and let $i_E$ be the inclusion map $(E,\emptyset)\subset (E,E_0)$.
Let us recall  that the underlying real vector bundle $E_{\mathbb{R}}$ has a canonical preferred orientation 
which yields a Thom class $u_E\in H^2(E,E_0,\ZZZ)$, see \cite[ch.9, ch.14]{Milnor-Stasheff}. Since $X$ is path connected, 
$\ZZZ\cong H^0(X,\ZZZ)\cong H^2(E,E_0,\ZZZ)=\ZZZ u_E$ by the Thom isomorphism.
The first Chern class  $c_1(E)$ is equal to the Euler class  $e(E_{\mathbb{R}})=(p_E^*)^{-1}i_E^*(u_E)$.
The map $\Phi$ induces a commutative diagram
 \[
\xymatrix{
\ZZZ u_E=H^2(E,E_0,\ZZZ)\ar[r]^-{i_E^*}&H^2(E,\ZZZ)&H^2(X,\ZZZ)\ar[l]_{p_E^*}\\
\ZZZ u_F=H^2(F,F_0,\ZZZ)\ar[u]^{\Phi^*}\ar[r]^-{i_F^*}&H^2(F,\ZZZ)\ar[u]^{\Phi^*}&H^2(X,\ZZZ)\ar[l]_{p_F^*}\ar@{=}[u]
}
\]
 Since $\Phi$ is a homeomorphism, $\Phi^*(u_F)=\pm u_E$
and hence $c_1(E)=\pm c_1(F)$. It follows that either $E\cong F$ or $E\cong \bar{F}$.
\end{proof}

\section{K-theory invariants of $O_E$}\label{section:2}
In this section we construct a sequence $(\mu_n(E))_{n}$ of K-theory invariants of $O_E$.
The class of the trivial  bundle of rank $r$ is denoted by $r\in K^0(X)$.
All the elements of the ring $\widetilde K^0(X)$ are nilpotent \cite{Kar:k-theory}.

Recall that for $E\in\Vect_{m+1}(X)$ we denote by $[\widetilde E]$  the
$\widetilde K^0(X)$-component
$[E]-(m+1)$ of $[E]$.
We introduce the following equivalence relation on $\widetilde K^0(X)$:
 $a\sim b$, if and only
if $a=b+mh+bh$ for some $h\in \widetilde K^0(X)$. Rewriting $a=b+mh+bh$ as $m+a=(m+b)(1+h)$
in $K^0(X)$, it becomes obvious that $\sim$ is an equivalence relation since $1+h$ in invertible in the ring $K^0(X)$ with inverse $1+\sum_{k\geq 1}\,(-1)^k h^k$. Moreover, if $E,F\in\Vect_{m+1}(X)$, then $(1-[E])K^0(X)=(1-[F])K^0(X)$ if and only if
$[\widetilde E]\sim [\widetilde F]$.

Note that with our new notation, Theorem~\ref{Thm:complete-obstruction_O_E} shows that $O_E\cong O_F$ $\Rightarrow$
$[\widetilde E]\sim [\widetilde F]$. In other words the equivalence class of $[\widetilde E]$
in $\widetilde K^0(X)/\sim$ is an invariant of $O_E$. In order to obtain more computable invariants,
for each $m\geq 1$, we use  the sequence of polynomials $p_n\in\ZZZ[x]$ introduced in \eqref{def:pn}. It is immediate that
 \begin{equation}\label{eqn:poly_n}
  p_{n+1}(x)=\frac{\ell(n+1)}{\ell(n)} m p_{n}(x)+(-1)^n \frac{\ell(n+1)}{n+1}x^{n+1}.
 \end{equation}

The first five polynomials in the sequence are:
\begin{itemize}
\item[] $p_1(x)=x,$
\item[] $p_2(x)=2mx-x^2,$
\item[] $p_3(x)=6m^2x-3mx^2+2x^3,$
\item[] $p_4(x)=12 m^3x -6 m^2x^2  + 4 mx^3  - 3x^4,$
\item[] $p_5(x)=60m^4x  - 30m^3x^2   + 20m^2x^3  - 15mx^4  + 12 x^5$.
\end{itemize}
\begin{lemma}\label{lemma:ABC} For any $n\geq 1$ there are polynomials $u_n,s_{n+1}\in \ZZZ[x,y]$ and
$v_n\in \ZZZ[x]$ such that each monomial of $s_{n+1}$ has total degree $\geq n+1$ and
 \begin{itemize}
 \item[(i)] $p_n(x+y)=p_n(x)+p_n(y)+xy \,u_n(x,y)$,
  \item[(ii)]  $p_n(x+my+xy)=p_n(x)+m^n\, v_n(y)+s_{n+1}(x,y).$
 \end{itemize}
\end{lemma}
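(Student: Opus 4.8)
The plan is to handle the two assertions by different means: part (i) by an elementary evaluation on the coordinate axes, and part (ii) by exploiting the functional equation of the logarithm, which is the very source of the polynomials $p_n$.

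For (i), I would set $P(x,y):=p_n(x+y)-p_n(x)-p_n(y)\in\ZZZ[x,y]$ and show $xy\mid P$. Since $\ZZZ[x,y]$ is a unique factorization domain in which $x$ and $y$ are non-associate primes, it is enough to verify that $P$ vanishes on each axis. Putting $y=0$ gives $P(x,0)=p_n(x)-p_n(x)-p_n(0)=-p_n(0)=0$, as $p_n$ has no constant term, and $P(0,y)=0$ by symmetry. Hence $x\mid P$ and $y\mid P$, so $xy\mid P$ and $u_n:=P/(xy)$ has integer coefficients.

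The substance is in (ii). I would work in $\QQQ[[x,y]]$ and write $p_n(z)$ as the truncation after the term $z^n$ of the series $\ell(n)\,m^n\log(1+z/m)=\ell(n)\sum_{k\ge1}(-1)^{k-1}\frac{m^{n-k}}{k}z^k$. The key identity is the factorization $1+(x+my+xy)/m=(1+x/m)(1+y)$, which upon taking logarithms gives $\log\bigl(1+(x+my+xy)/m\bigr)=\log(1+x/m)+\log(1+y)$ in $\QQQ[[x,y]]$. Multiplying by $\ell(n)m^n$ and separating, on each side, the part of total degree $\le n$ from the tail of total degree $\ge n+1$, I would argue as follows. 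The substitution $z=x+my+xy$ has degree-one part $x+my$, so $z^k$ contributes only monomials of total degree $\ge k$; therefore the image under this substitution of the tail $\ell(n)m^n\sum_{k>n}(-1)^{k-1}z^k/(km^k)$ lies entirely in total degrees $\ge n+1$. On the other side, $\ell(n)m^n\log(1+x/m)$ has degree-$\le n$ part equal to $p_n(x)$ (tail in degrees $\ge n+1$), while the degree-$\le n$ part of $\ell(n)m^n\log(1+y)=\sum_{k\ge1}(-1)^{k-1}\frac{\ell(n)}{k}m^ny^k$ is $m^nv_n(y)$ with $v_n(y):=\sum_{k=1}^n(-1)^{k-1}\frac{\ell(n)}{k}y^k$; this $v_n$ has integer coefficients precisely because $\ell(n)=\mathrm{lcm}(1,\dots,n)$ is divisible by every $k\le n$. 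Setting $s_{n+1}(x,y):=p_n(x+my+xy)-p_n(x)-m^nv_n(y)$, it is automatically an integer polynomial, being a difference of integer polynomials; comparing the two sides of the log identity then exhibits $s_{n+1}$ as a combination of the three tails above, hence as a power series all of whose monomials have total degree $\ge n+1$. This is exactly the required form.

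The main obstacle I anticipate is the degree bookkeeping in (ii): one must check carefully that every truncation tail, after the substitution $z\mapsto x+my+xy$, lands in total degree $\ge n+1$ (this rests on $z$ having a nonzero degree-one part $x+my$), and simultaneously that the retained $y$-terms have integer coefficients, which is exactly where the factor $\ell(n)$ in the definition of $p_n$ is used.
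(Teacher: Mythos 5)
Your proof is correct and, for the substantive part (ii), follows essentially the same route as the paper: the factorization $1+(x+my+xy)/m=(1+x/m)(1+y)$, the logarithm's functional equation, the observation that all truncation tails land in total degree $\geq n+1$ (using that the substituted series has nonzero linear part), and integrality of $v_n$ via $\ell(n)$, with $s_{n+1}$ defined as the difference and hence automatically integral. The only variation is in the trivial part (i), where you argue by vanishing on the axes and unique factorization in $\ZZZ[x,y]$ rather than by the paper's direct appeal to the binomial formula; both are equally valid one-line arguments.
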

\begin{proof}
It follows from the binomial formula that for any polynomial $p\in \ZZZ[x]$ with $p(0)=0$ there is
a polynomial $u\in \ZZZ[x,y]$ such that  $p(x+y)=p(x)+p(y)+xy \,u(x,y)$. This proves (i).
Let us now prove (ii). Set $V_n(x)=\sum_{k=1}^n (-1)^{k-1} x^k/k=\log(1+x)_{[n]}.$
 Then \(p_n(x)=\ell(n)m^n V_n(x/m)\). Each monomial in $x$ and $y$ that appears in expansion of the series
 $\sum_{k\geq n+1} \,(-1)^{k-1}(x+y+xy)^k/k$ has total degree $\geq n+1$. Therefore,
 the equality of formal series $\log(1+x+y+xy)=\log(1+x)+\log(1+y)$ shows that in the reduced  form of the
 polynomial $r_{n+1}(x,y):= V_n(x+y+xy)-V_n(x)-V_n(y)$ all the monomials
have total degree $\geq n+1$. It follows that
 \begin{align*}
    p_n(x+my+xy)
    &= \ell(n)m^nV_n(x/m +y + x/m\cdot y)\\
    &=\ell(n)m^nV_n(x/m)+\ell(n)m^nV_n(y)+\ell(n)m^n r_{n+1}(x/m,y)\\
    &=p_n(x)+m^n\cdot v_n(y)+s_{n+1}(x,y),
  \end{align*}
where $s_{n+1}(x,y):=\ell(n)m^nr_{n+1}(x/m,y)$ and $v_n(y):=\ell(n)V_n(y)$. Since both $p_n$ and $v_n$ have integer coefficients, so must have
$s_{n+1}$. \end{proof}

Let $X$ be a finite  CW complex of dimension $d$ with skeleton
decomposition
 \[\emptyset=X_{-1}\subset X_0\subset X_1\subset \cdots \subset X_{d}=X.\]
Consider the induced filtration of $ K^*(X)$
\[K^*_q(X)=\ker \big( K^*(X)\to  K^*(X_{q-1})\big)=\mathrm{image}\big(K^*(X,X_{q-1})\to K^*(X)\big).\]
One has
$\{0\}=K^*_{d+1}(X)\subset K_{d}^*(X)\subset \cdots\subset K^*_1(X)\subset K^*_0(X)=K^*(X)$
and  \[K_q^*(X)K^*_r(X)\subset K_{q+r}^*(X).\]
Since the map $K^0(X_{2i+1})\to K^0(X_{2i})$ is injective, it follows that
$K_{2i+1}^0(X)=K_{2i+2}^0(X)$.
We will use only the even components of this filtration corresponding to $K^0(X)$, namely
\begin{equation}\label{eq:filtration}
 \{0\}=K^0_{2\lfloor d/2 \rfloor+2}(X)\subset K^0_{2\lfloor d/2 \rfloor}(X)\subset  \cdots\subset K^0_2(X)\subset K^0_0(X)=K^0(X).
\end{equation}
 Since  $\widetilde K^0(X)=K^0_1(X)=K^0_2(X)$ we have
\begin{equation}\label{eqn:100}
\widetilde K^0(X)^{j}\subset K^0_{2j}(X).
\end{equation}
In particular,
 $\widetilde K^0(X)^{\lfloor d/2 \rfloor+1} \subset K^0_{d+1}(X)$ and hence $\widetilde K^0(X)^{\lfloor d/2 \rfloor+1}=\{0\}$.
\begin{definition}\label{def:mu-invariants}
 Let $X$ be a finite  CW complex of dimension $d$. For each $ n \geq 1$
 we define  the map
\[\mu_n:\Vect_{m+1}(X)\to \widetilde K^0(X)/K^0_{2n+2}(X),\]
by  $\mu_n(E)=\pi_n(p_n([\widetilde E]))$ where   $\pi_n:K^0(X)\to \widetilde K^0(X)/K^0_{2n+2}(X)$ is the natural quotient map.
For $n\geq \lfloor d/2 \rfloor$, $\mu_n(E)=p_n([\widetilde E])\in\widetilde K^0(X)$ since
$ K^0_{2n+2}(X)=\{0\}$.
\end{definition}

\begin{theorem}\label{thm:poly-obstruction2} Let $X$ be a  finite  CW complex
and let $E,F \in \Vect_{m+1}(X)$.  If $O_E$ and $O_F$ are  isomorphic as $C(X)$-algebras,  then  $\mu_n(E)-\mu_n(F)$ is divisible by $m^n$, for  $n\geq 1$.
\end{theorem}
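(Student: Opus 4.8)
The plan is to convert the isomorphism hypothesis into a single algebraic identity in $\widetilde K^0(X)$, feed that identity through the expansion in Lemma~\ref{lemma:ABC}(ii), and then use the multiplicative filtration \eqref{eqn:100} to kill the resulting higher-order remainder after applying $\pi_n$. First I would record what the isomorphism buys us: by Theorem~\ref{Thm:complete-obstruction_O_E} (restated via the relation $\sim$), $O_E\cong O_F$ as $C(X)$-algebras implies $[\widetilde E]\sim[\widetilde F]$, so there is $h\in\widetilde K^0(X)$ with
\[
[\widetilde E]=[\widetilde F]+mh+[\widetilde F]\,h.
\]
Writing $a=[\widetilde E]$ and $b=[\widetilde F]$, this is precisely $a=b+mh+bh$, which is the input shape demanded by Lemma~\ref{lemma:ABC}(ii).

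Next I would apply Lemma~\ref{lemma:ABC}(ii) with $x=b$ and $y=h$, so that $b+mh+bh=x+my+xy$, obtaining
\[
p_n(a)=p_n(b+mh+bh)=p_n(b)+m^n\,v_n(h)+s_{n+1}(b,h),
\]
and hence $p_n(a)-p_n(b)=m^n\,v_n(h)+s_{n+1}(b,h)$. The first term is manifestly divisible by $m^n$. The crucial observation is that the remainder $s_{n+1}(b,h)$ is harmless modulo the filtration: since each monomial of $s_{n+1}$ has total degree $\geq n+1$ and both $b=[\widetilde F]$ and $h$ lie in $\widetilde K^0(X)$, every term of $s_{n+1}(b,h)$ is a product of at least $n+1$ elements of $\widetilde K^0(X)$, so $s_{n+1}(b,h)\in\widetilde K^0(X)^{n+1}\subseteq K^0_{2n+2}(X)$ by \eqref{eqn:100}. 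Therefore $\pi_n(s_{n+1}(b,h))=0$, and
\[
\mu_n(E)-\mu_n(F)=\pi_n\big(p_n(a)-p_n(b)\big)=m^n\,\pi_n(v_n(h)),
\]
which is divisible by $m^n$ in $\widetilde K^0(X)/K^0_{2n+2}(X)$, as claimed (here $v_n(h)\in\widetilde K^0(X)$ since $v_n$ has zero constant term).

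The computation itself is short once the ingredients are assembled, so the real content — and the only place where anything could go wrong — is matching the degree of the remainder term $s_{n+1}$ against the precise filtration level we are quotienting by. This is exactly why the invariant is defined modulo $K^0_{2n+2}(X)$ rather than on all of $\widetilde K^0(X)$: without quotienting, the leftover term $s_{n+1}([\widetilde F],h)$ need not be divisible by $m^n$, and the statement would fail. Thus the main point to verify carefully is the inclusion $\widetilde K^0(X)^{n+1}\subseteq K^0_{2n+2}(X)$ from \eqref{eqn:100}, which is what allows the higher-order correction to be discarded and makes $\mu_n$ a genuine $m^n$-divisible invariant of the pair.
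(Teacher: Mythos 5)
Your proposal is correct and coincides with the paper's own proof: both convert the isomorphism into the identity $[\widetilde E]=[\widetilde F]+mh+[\widetilde F]h$ via Theorem~\ref{Thm:complete-obstruction_O_E}, expand with Lemma~\ref{lemma:ABC}(ii), and discard the remainder $s_{n+1}([\widetilde F],h)\in\widetilde K^0(X)^{n+1}\subset K^0_{2n+2}(X)$ using \eqref{eqn:100}. No gaps; the emphasis you place on the inclusion $\widetilde K^0(X)^{n+1}\subseteq K^0_{2n+2}(X)$ is exactly the point the paper's definition of $\mu_n$ is engineered around.
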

\begin{proof} Set $a=[\widetilde E]$ and $b=[\widetilde F]$. If $O_E\cong O_F$, then
by Theorem~\ref{Thm:complete-obstruction_O_E} there is $h\in \widetilde K^0(X)$ such that
$a=b+mh+bh$. Then,  by Lemma~\ref{lemma:ABC} (ii)
 \[p_n(a)=p_n(b+mh+bh)=p_n(b)+m^n v_n(h)+s_{n+1}(b,h),\]
and $s_{n+1}(b,h)\in \widetilde K^0(X)^{n+1}\subset K_{2n+2}^0(X)$ by \eqref{eqn:100}.
Thus $\mu_n(E)-\mu_n(F)=m^n\pi_n(v_n(h))$.
 \end{proof}
 As a corollary, we derive  Theorem~\ref{thm:k-theory-inv}, restated here as follows:
\begin{corollary}\label{cor:k-theory-inv}
 Let $X$ be a  finite  CW complex of dimension $d$ and let $E,F \in \Vect_{m+1}(X)$.
If $O_E\cong O_F$  as $C(X)$-algebras,  then
$p_{\lfloor d/2 \rfloor}([\widetilde E])-p_{\lfloor d/2 \rfloor}([\widetilde E])$ is divisible by $m^{\lfloor d/2 \rfloor}$ in $\widetilde K^0(X)$.
\end{corollary}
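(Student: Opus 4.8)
The plan is to obtain the statement as the special case $n=\lfloor d/2\rfloor$ of Theorem~\ref{thm:poly-obstruction2}. That theorem already asserts that an isomorphism $O_E\cong O_F$ of $C(X)$-algebras forces $\mu_n(E)-\mu_n(F)$ to be divisible by $m^n$ in the quotient group $\widetilde K^0(X)/K^0_{2n+2}(X)$ for every $n\geq 1$; all that remains is to identify, for this particular choice of $n$, the invariant $\mu_n$ with the honest element $p_n([\widetilde E])$ of $\widetilde K^0(X)$, and to record that the divisibility then holds in $\widetilde K^0(X)$ itself rather than merely in a quotient.

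First I would examine the target group $\widetilde K^0(X)/K^0_{2n+2}(X)$ of Definition~\ref{def:mu-invariants} at $n=\lfloor d/2\rfloor$. By the filtration \eqref{eq:filtration} the top level $K^0_{2\lfloor d/2\rfloor+2}(X)$ is already the zero group, so the quotient map $\pi_{\lfloor d/2\rfloor}$ is the identity on $\widetilde K^0(X)$ and $\mu_{\lfloor d/2\rfloor}(E)=p_{\lfloor d/2\rfloor}([\widetilde E])$, the equality taking place in $\widetilde K^0(X)$ itself. This is exactly the observation recorded in the final line of Definition~\ref{def:mu-invariants} for $n\geq\lfloor d/2\rfloor$.

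With this identification in hand, Theorem~\ref{thm:poly-obstruction2} applied to $n=\lfloor d/2\rfloor$ says precisely that $p_{\lfloor d/2\rfloor}([\widetilde E])-p_{\lfloor d/2\rfloor}([\widetilde F])$ is divisible by $m^{\lfloor d/2\rfloor}$ in $\widetilde K^0(X)$, which is the assertion. There is essentially no obstacle at this stage, since all the content---the passage from an isomorphism of $C(X)$-algebras to the relation $[\widetilde E]=[\widetilde F]+mh+[\widetilde F]h$ via Theorem~\ref{Thm:complete-obstruction_O_E}, and the control of the error term $s_{n+1}([\widetilde F],h)\in\widetilde K^0(X)^{n+1}$ through the nilpotency estimate \eqref{eqn:100}---was already absorbed into the proof of Theorem~\ref{thm:poly-obstruction2}. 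The single point worth double-checking is that the dimension hypothesis enters only through the vanishing $\widetilde K^0(X)^{\lfloor d/2\rfloor+1}=\{0\}$, equivalently $K^0_{2\lfloor d/2\rfloor+2}(X)=\{0\}$, so that no residual term survives and the divisibility is exact rather than merely modulo the filtration.
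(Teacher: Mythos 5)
Your proposal is correct and coincides with the paper's own proof, which is exactly the same one-line specialization: at $n=\lfloor d/2\rfloor$ one has $K^0_{2n+2}(X)=\{0\}$, so $\mu_n(E)=p_n([\widetilde E])$ lives in $\widetilde K^0(X)$ itself and Theorem~\ref{thm:poly-obstruction2} gives the divisibility there (noting, as you implicitly did, that the statement's second term should read $p_{\lfloor d/2\rfloor}([\widetilde F])$, a typo in the corollary as printed).
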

\begin{proof}
 If $n\geq \lfloor d/2 \rfloor$, then  $K_{2n+2}^0(X)=\{0\}$ and so
$p_n(a)-p_n(b)\in m^n \widetilde K^0(X)$.
\end{proof}
\begin{remark}\label{remark1}
 Let us note that
$\mu_{\lfloor d/2 \rfloor}(E)$  determines
$\mu_{\lfloor d/2 \rfloor+k}(E)$ for $k\geq 1$. Indeed, letting $n=\lfloor d/2 \rfloor$ it follows from \eqref{eqn:poly_n} that
\[\label{eqn:101}
 \mu_{n+k}(E)=\frac{\ell(n+k)}{\ell(n)}m^k \mu_{n}(E),
\]
since $\widetilde K^0(X)^{n+k}=\{0\}$.
 Let us  note that $\mu_{\lfloor d/2 \rfloor}(E)$  is also related to the lower order invariants. Indeed, it follows immediately from \eqref{eqn:poly_n} and \eqref{eqn:100}
that if $1\leq j\leq \lfloor d/2 \rfloor$, then
\[\frac{\ell(j)}{\ell(j-1)}\,m\,\mu_{j-1}(E)=\pi_{j,j-1}(\mu_{j}(E)),\]
where $\pi_{j,j-1}$ stands for the quotient map  $\widetilde K^0(X)/K^0_{2j+2}(X)\to \widetilde K^0(X)/K^0_{2j}(X)$. From this,
with $n=\lfloor d/2 \rfloor$, we obtain
\[\frac{\ell(n)}{\ell(n-j)}\,m^j\,\mu_{n-j}(E)=\pi_{n-j}(\mu_{n}(E)).
\]
Assuming that $\Tor(K^0_{2j}(X)/K^0_{2j+2}(X),\ZZZ/m)=0$ for all $j\geq 1$, and that $m$ and $\lfloor d/2 \rfloor!$ are relatively prime, it follows that if
 $\mu_{\lfloor d/2 \rfloor}(E)-\mu_{\lfloor d/2 \rfloor}(F)$ is divisible by $m^{\lfloor d/2 \rfloor}$, then $\mu_{j}(E)-\mu_{j}(F)$ is divisible by $m^{j}$ for all $j\geq 1$.

The groups $\widetilde K^0(X)/K^0_{2j}(X)$ are homotopy invariants of $X$,
and they are actually independent of the CW structure \cite{AH}.
Let $k^j(X)$ denote the reduced connective K-theory of $X$ and let $\beta:k^{j+2}(X)\to k^j(X)$ be the
Bott operation. One can identify $k^{2j+2}(X)$ with $K^0(X,X_{2j})$ in such a way that $\beta$
corresponds to the map $K^0(X,X_{2j})\to K^0(X,X_{2j-2})$.
Thus the image of $\beta^{j+1}:k^{2j+2}(X)\to k^0(X)\cong \widetilde K^0(X)$ coincides with
$K^0_{2j}(X)$, and hence $\mu_j(E)$ can be viewed as an element of $k^0(X)/\beta^{j+1}k^{2j+2}(X)$.
\end{remark}

\section{Cohomology invariants of $O_E$}\label{section:2+}
Let us recall that $V_n(x)=\log(1+x)_{[n]}$ and consider the polynomials
\[W_n(x)= \frac{n!}{\ell(n)}p_n(x)=n!m^n\,\log\left(1+\frac{x}{m}\right)_{[n]}=n!m^nV_n(\frac{x}{m})=\sum_{r=1}^n (-1)^{r-1} m^{n-r}\,\frac{n!}{r}x^r.\]

For a polynomial $P$ in variables $x_1$,...$x_n$, we assign to the variable $x_k$ the weight $k$ and
denote by $P(x_1,...,x_n)_{\langle n \rangle}$ the sum of all monomials of $P$ of total weight $n$.
For example if $P(x_1,x_2,x_3)=(x_1+\frac{x_2}{2}+\frac{x_3}{3})^2$, then $P(x_1,x_2,x_3)_{\langle 3 \rangle}=x_1x_2.$
Consider the polynomials
 \begin{align*}q_n(x_1,...,x_n)&=W_n\left(\frac{x_1}{1!}+...+\frac{x_n}{n!}\right)_{\langle n \rangle}=\sum_{r=1}^n(-1)^{r-1}m^{n-r}\frac{n!}{r}\left(\frac{x_1}{1!}+...+\frac{x_n}{n!}\right)^r_{\langle n \rangle}\\&=\sum_{r=1}^n(-1)^{r-1}m^{n-r}\sum_{ \substack{k_1+...+k_n=r,\\ k_1+2k_2+...+nk_n=n}}\frac{n!\,r!}{1!^{k_1}\cdots{n!}^{k_n}\,k_1!\cdots k_n!\,r}\,x_1^{k_1}\cdots x_n^{k_n}.\end{align*}
 Thus
 \begin{equation*}\label{qnn}
 q_n(x_1,...,x_n)=\sum_{ k_1+2k_2+...+nk_n=n}(-1)^{k_1+\cdots+k_n-1} m^{n-(k_1+\cdots+k_n)}\frac{n!\,(k_1+\cdots+k_n-1)!}{1!^{k_1}\cdots{n!}^{k_n}\,k_1!\cdots k_n!}\,x_1^{k_1}\cdots x_n^{k_n}.
 \end{equation*}
 Consider also the polynomials $r_n$  obtained from $q_n$ by taking $m=1$, i.e.
 \[r_n(x_1,...,x_n)=n!V_n\left(\frac{x_1}{1!}+...+\frac{x_n}{n!}\right)_{\langle n \rangle}=\sum_{r=1}^n(-1)^{r-1}\frac{n!}{r}\left(\frac{x_1}{1!}+...+\frac{x_n}{n!}\right)^r_{\langle n \rangle}.\]

\begin{lemma}\label{lemma:integralcoeff}
  The polynomials $q_n(x_1,...,x_n)$ and $r_n(x_1,...,x_n)$ have integer coefficients.
 \end{lemma}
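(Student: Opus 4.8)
The plan is to record the weight grading by a single formal variable: substituting $x_k\mapsto x_k t^k$ turns the operation $(\,\cdot\,)_{\langle n\rangle}$ of extracting the total-weight-$n$ part into extraction of the coefficient of $t^n$, which I write as $[t^n]$. Since $s(t):=\sum_{k\geq 1}x_k t^k/k!$ has zero constant term, the $j$-th term of the logarithm begins in degree $t^j$, so only $j\leq n$ contribute to $[t^n]$ and the truncations built into $V_n$ and $W_n$ are harmless. Hence I may work with the full series and record the closed forms
\[ r_n=n!\,[t^n]\log\bigl(1+s(t)\bigr),\qquad q_n=n!\,m^n\,[t^n]\log\bigl(1+s(t)/m\bigr). \]

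First I would absorb the powers of $m$ into the coefficients by the rescaling $t=mu$. Since $s(mu)/m=\sum_{k\geq 1}m^{k-1}x_k\,u^k/k!$, setting $\mu_0:=1$, $\mu_k:=m^{k-1}x_k$ for $k\geq 1$, and $M(u):=\sum_{k\geq 0}\mu_k u^k/k!$ gives $\sum_{n\geq 1}q_n\,u^n/n!=\log M(u)$, equivalently $M(u)=\exp\bigl(\sum_{n\geq 1}q_n u^n/n!\bigr)$. Thus the $q_n$ are exactly the \emph{cumulants} associated with the \emph{moments} $\mu_k$, and $r_n$ is the case $m=1$, with moments $x_k$. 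Because each $\mu_k=m^{k-1}x_k$ lies in $\ZZZ[x_1,\dots,x_n]$, the lemma reduces to the classical integrality of the moment--cumulant relation.

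To prove that integrality I would use the recursion obtained by differentiating $M=\exp\bigl(\sum q_n u^n/n!\bigr)$, namely $M'=M\cdot\sum_{n\geq 0}q_{n+1}u^n/n!$. Comparing coefficients of $u^n/n!$ yields
\[ \mu_{n+1}=\sum_{j=0}^{n}\binom{n}{j}\mu_{n-j}\,q_{j+1},\qquad\text{so}\qquad q_{n+1}=\mu_{n+1}-\sum_{j=0}^{n-1}\binom{n}{j}\mu_{n-j}\,q_{j+1}. \]
As the $\binom{n}{j}$ and the $\mu_k$ are integral, an induction starting from $q_1=\mu_1=x_1$ shows $q_n\in\ZZZ[x_1,\dots,x_n]$, and likewise $r_n$. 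To recover the explicit coefficients displayed in the statement I would instead expand $\log M$ over set partitions: the coefficient of $\mu_1^{k_1}\cdots\mu_n^{k_n}$ in $q_n$ equals $(-1)^{k_1+\cdots+k_n-1}(k_1+\cdots+k_n-1)!$ times the number $n!/\bigl(1!^{k_1}\cdots n!^{k_n}\,k_1!\cdots k_n!\bigr)$ of partitions of an $n$-set into $k_j$ blocks of size $j$, which is a positive integer; reinstating $\mu_k=m^{k-1}x_k$ reproduces the factor $m^{n-(k_1+\cdots+k_n)}$.

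The mathematical content here is the standard integrality of cumulants in the moments, so I expect no genuine obstruction; the one place that needs care is the bookkeeping. Specifically, I would verify that the multinomial expression $n!/\bigl(1!^{k_1}\cdots n!^{k_n}\,k_1!\cdots k_n!\bigr)$ is exactly the set-partition count (hence an integer) and that the exponent $n-(k_1+\cdots+k_n)=\sum_j(j-1)k_j$ of $m$ is nonnegative on the index set $\sum_j jk_j=n$, so that no denominators survive the passage from $\QQQ$ to $\ZZZ$.
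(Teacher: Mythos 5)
Your proof is correct, but it takes a genuinely different route from the paper's. The paper works directly with the explicit coefficient in \eqref{qn} and factors it by hand as
\[
\frac{n!\,(k_1+\dots+k_n-1)!}{1!^{k_1}\cdots{n!}^{k_n}\,k_1!\cdots k_n!}
=\frac{(k_1+2k_2+\dots+nk_n)!}{(k_1)!(2k_2)!\cdots (nk_n)!}\, a(1,k_1)\cdots a(n,k_n)\,(k_1+\dots+k_n-1)!,
\]
where $a(j,k)=\frac{(jk)!}{(j!)^k\,k!}$ is shown to be an integer via the recurrence $a(j,k)=\binom{jk-1}{j-1}a(j,k-1)$; thus integrality is read off each monomial as a product of a multinomial coefficient, block-partition counts, and a factorial. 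You instead never touch the closed-form coefficients for the integrality step: you package the weight grading into a formal variable, rescale $t=mu$ to absorb the powers of $m$ into the moments $\mu_k=m^{k-1}x_k$, identify the $q_n$ as the cumulants of these moments via $\sum q_n u^n/n!=\log M(u)$, and then get integrality from the moment--cumulant recursion $q_{n+1}=\mu_{n+1}-\sum_{j=0}^{n-1}\binom{n}{j}\mu_{n-j}q_{j+1}$ by induction. Your approach is more conceptual and arguably more robust --- it requires no a priori guess of a factorization identity, and the rescaling trick handles the $m$-powers automatically --- while the paper's argument is more elementary and self-contained, and has the side benefit of exhibiting each coefficient explicitly as (essentially) a count of set partitions by block type times a factorial, which your final set-partition paragraph recovers only as an optional consistency check. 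Both proofs are complete; your generating-function bookkeeping (truncation harmless since $s(t)$ has zero constant term, weight extraction equals $[t^n]$) is handled correctly.
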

\begin{proof} We have a factorization
 \[
\frac{n!\,(k_1+\dots+k_n-1)!}{1!^{k_1}\cdots{n!}^{k_n}\,k_1!\cdots k_n!}
=\frac{(k_1+2k_2+\dots+nk_n)!}{(k_1)!(2k_2)!\cdots (nk_n)!}\, a(1,k_1)\cdots a(n,k_n)(k_1+\dots+k_n-1)!,
 \]
 where $a(j,k)=\frac{(jk)!}{(j!)^k\,k!}$. It follows that the coefficient of $x_1^{k_1}\cdots x_n^{k_n}$  is an integer since it involves a multinomial coefficient and numbers $a(j,k)$ which are easily seen to be integers by using the recurrence formula $a(j,k)=\binom{jk-1}{j-1}a(j,k-1)$ where $a(j,1)=1$.
\end{proof}

Let us recall from \cite{Kar:k-theory} that the components of the Chern character $ch(E)=\sum_{k\geq 0}\, s_k(E)/k!$
involve integral stable characteristic classes $s_k$, also denoted by $ch_k$.
The classes $s_k$ have
 two important properties. If one sets $s_0(E)=\mathrm{rank}(E)$,
 then for $k\geq 0$:
\[s_k(E\oplus F)=s_k(E)+s_k(F)\]
\begin{equation}\label{s-classes}
s_k(E\otimes F)=\sum_{i+j=k}\frac{k!}{i!j!}s_i(E)s_j(F).
\end{equation}
We are now ready to prove Theorem~\ref{thm:cohomology-inv},  restated here for the convenience of the reader:
 \begin{theorem}\label{thm:poly-obstruction3}
   Let $X$ be a  compact metrizable space  and let $E,F \in \Vect_{m+1}(X)$.
If $O_E\cong O_F$  as $C(X)$-algebras,  then
$q_n(s_1(E),...,s_n(E))-q_n(s_1(F),...,s_n(F))$ is divisible by $m^{n}$ in $H^{2n}(X,\ZZZ)$, for each $n\geq 1$.
 \end{theorem}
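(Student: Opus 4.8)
The plan is to transfer the $K$-theoretic obstruction of Theorem~\ref{thm:poly-obstruction2} (equivalently Corollary~\ref{cor:k-theory-inv}) into cohomology by means of the Chern character. The key bridge should be the relationship between the one-variable polynomials $p_n$ (or their rescalings $W_n=\tfrac{n!}{\ell(n)}p_n$) acting on $\widetilde K^0(X)$ and the many-variable polynomials $q_n$ acting on the Chern classes $s_k$. Specifically, since $ch(E)=\sum_{k\geq 0} s_k(E)/k!$ and $s_0(E)=m+1$, the reduced character $\widetilde{ch}(E)=\sum_{k\geq 1}s_k(E)/k!$ is the image of $[\widetilde E]$, and applying $W_n$ fiberwise and extracting the degree-$2n$ component should produce precisely $q_n(s_1(E),\dots,s_n(E))$ up to the degree-$2n$ component of $W_n(\widetilde{ch}(E))$. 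So the first step is to record that $q_n(s_1(E),\dots,s_n(E))$ is the weight-$n$ (equivalently cohomological degree-$2n$) part of $W_n(\widetilde{ch}(E))$, which is immediate from the very definition of $q_n$ as $W_n\big(\tfrac{x_1}{1!}+\cdots+\tfrac{x_n}{n!}\big)_{\langle n\rangle}$ once one substitutes $x_k=s_k(E)$.

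First I would fix $n\geq 1$ and invoke the $K$-theoretic obstruction. By Corollary~\ref{cor:k-theory-inv} applied to the $2n$-skeleton, or more directly by the proof of Theorem~\ref{thm:poly-obstruction2}, the hypothesis $O_E\cong O_F$ yields $h\in\widetilde K^0(X)$ with $[\widetilde E]=[\widetilde F]+mh+[\widetilde F]h$, and Lemma~\ref{lemma:ABC}(ii) gives
\[
p_n([\widetilde E])=p_n([\widetilde F])+m^n v_n(h)+s_{n+1}([\widetilde F],h),
\]
where every monomial of $s_{n+1}$ has total degree $\geq n+1$, so that $s_{n+1}([\widetilde F],h)\in\widetilde K^0(X)^{n+1}$. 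Multiplying by $n!/\ell(n)$ converts $p_n$ into $W_n$; the point is that the difference $W_n([\widetilde E])-W_n([\widetilde F])$ equals $m^n$ times an integral class plus a correction lying in $\widetilde K^0(X)^{n+1}$.

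Next I would apply the Chern character, which is a ring homomorphism $ch\colon K^0(X)\otimes\QQQ\to H^{even}(X,\QQQ)$ sending $\widetilde K^0(X)$ into $\bigoplus_{k\geq 1}H^{2k}$. The crucial observation is that $\widetilde K^0(X)^{n+1}$ maps under $ch$ into $\bigoplus_{k\geq n+1}H^{2k}$, because a product of $n+1$ reduced classes has Chern character starting in cohomological degree $2(n+1)$. Therefore, after projecting onto the degree-$2n$ component $H^{2n}(X,\QQQ)$, the correction term $s_{n+1}([\widetilde F],h)$ disappears, and we obtain that the degree-$2n$ component of $ch\big(W_n([\widetilde E])-W_n([\widetilde F])\big)$ is divisible by $m^n$. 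By the identification from the first step, this degree-$2n$ component is exactly $q_n(s_1(E),\dots,s_n(E))-q_n(s_1(F),\dots,s_n(F))$, now viewed rationally. The divisibility by $m^n$ thus holds in $H^{2n}(X,\QQQ)$, and since $q_n$ has integer coefficients (Lemma~\ref{lemma:integralcoeff}) these classes already live in $H^{2n}(X,\ZZZ)$; I would then argue, using that $m^n v_n(h)$ comes from an integral class and that the integral cohomology injects appropriately, that the divisibility in fact holds integrally. Reducing to a finite subcomplex (or using the continuity of \v{C}ech cohomology) handles the passage from finite CW complexes to general compact metrizable $X$.

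I expect the main obstacle to be the last point: upgrading divisibility in $H^{2n}(X,\QQQ)$ to divisibility in $H^{2n}(X,\ZZZ)$, since the Chern character is only a rational isomorphism and torsion in $H^{even}(X,\ZZZ)$ is invisible to it. The clean way around this is to avoid the rational character and instead work directly with the \emph{integral} classes $s_k$: because $s_k(E\oplus F)=s_k(E)+s_k(F)$ and $s_k$ satisfies the multiplicative rule \eqref{s-classes}, the assignment $E\mapsto (s_1(E),s_2(E),\dots)$ is compatible with the ring structure of $K^0(X)$ in exactly the formal way needed for the polynomial identity $W_n\circ(\text{addition of }[\widetilde E])$ to reduce, degree by degree, to the integral polynomial identity encoded by $q_n$. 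Thus the correct strategy is to carry out the algebra of Lemma~\ref{lemma:ABC}(ii) at the level of the integral characteristic classes $s_k$ from the outset, so that the degree-$2n$ truncation lands in $H^{2n}(X,\ZZZ)$ with the correction term genuinely vanishing because it is built from products of at least $n+1$ positive-degree classes; the factor $m^n$ divisibility is then inherited directly from the $K$-theory statement without ever tensoring with $\QQQ$.
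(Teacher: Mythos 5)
Your final strategy---abandoning the rational Chern character in favor of carrying out the formal identity of Lemma~\ref{lemma:ABC}(ii) directly on the integral classes $s_k$, using their additivity and the multiplicative rule \eqref{s-classes} so that the weight-$n$ truncation yields an integer polynomial identity in which the correction term vanishes in degree $2n$ and $m^n$ multiplies the integral class $r_n(s_1(H),\dots,s_n(H))$---is precisely the paper's proof. You also correctly diagnosed why the rational detour cannot suffice (torsion is invisible to $ch$, so rational equalities only determine integral classes up to torsion), which is exactly the reason the paper works integrally from the outset.
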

\begin{proof}
Multiplying by $n!/\ell(n)$ in  Lemma~\ref{lemma:ABC}, (ii), we obtain
 \begin{equation}\label{eqn:W}
  W_n(y+mh+yh)-W_n(y)={m^n}{ n!}\,V_n(h)+\frac{n!}{\ell(n)}s_{n+1}(y,h).
 \end{equation}
Recall that $s_{n+1}$ is a polynomial with all monomials of degree at least $n+1$.
Let us make in \eqref{eqn:W} the substitutions
\[y=\frac{y_1}{1!}+\dots+\frac{y_n}{n!}, \quad h=\frac{h_1}{1!}+\dots+\frac{h_n}{n!},\]
where $y_k$ and $h_k$ have weight $k$. With these substitutions we have
\[W_n(y)_{\langle n \rangle}=q_n(y_1,...,y_n),\quad n!\,V_n(h)_{\langle n \rangle}=r_n(h_1,...,h_n),\]
whereas $\frac{n!}{\ell(n)}s_{n+1}(y,h)_{\langle n \rangle}=0$.
By  grouping the terms of $y+mh+yh$ of the same weight, we have
\[y+mh+yh=\frac{y_1+mh_1}{1!}+\frac{y_2+mh_2+2y_1h_1}{2!}+\cdots+\frac{y_n+mh_n+\sum_{i+j=n}\frac{n!}{i!j!}y_ih_{j}}{n!},\]
and hence
\[W_n(y+mh+yh)_{\langle n \rangle}=q_n(y_1+mh_1,\,y_2+mh_2+2y_1h_1\,,...\,,y_n+mh_n+\sum_{i+j=n}\frac{n!}{i!j!}y_ih_{j})\]
Thus, the equation
$W_n(y+mh+yh)_{\langle n \rangle}-W_n(y)_{\langle n \rangle}= n!\,V_n(h)_{\langle n \rangle}$
 implies that
\[q_n(y_1+mh_1,\,y_2+mh_2+2y_1h_1,\,...\,,y_n+mh_n+\sum_{i+j=n}\frac{n!}{i!j!}y_ih_{j})-q_n(y_1,...,y_n)\]
is equal to $m^nr_n(h_1,...,h_n)$.

Suppose now $O_E\cong O_F$. Then, by Theorem~\ref{Thm:complete-obstruction_O_E}, $[\widetilde E]=[\widetilde F]+m H+[\widetilde F]H$ for some
$H\in \widetilde K^0(X)$. Using ~\eqref{s-classes} it follows that for $k\geq 1$
\[s_k(E)=s_k(F)+m\,s_k(H)+\sum_{i+j=k}\frac{k!}{i!j!}s_i(F)s_j(H),\]
and hence
\[q_n(s_1(E),...,s_n(E))-q_n(s_1(F),...,s_n(F))=m^n\,r_n(s_1(H),...,s_n(H)).\]
\end{proof}
It follows by Theorem~\ref{thm:poly-obstruction3}, that the image of $q_n(s_1(E),...,s_n(E))$ in $H^{2n}(X,\ZZZ/m^n)$ is an invariant of $O_E$ for each $n\geq 1$.
The first four invariants in this sequence are:
\begin{itemize}
\item[] $\dot s_1(E)\in H^2(X;\mathbb{Z}/m)$
\item[] $m \dot s_2(E)-\dot s_1(E)^2\in H^4(X;\mathbb{Z}/m^2)$
\item[] $m^2 \dot s_3(E)-3m \dot s_1(E)\dot s_2(E)+2 \dot s_1(E)^3 \in H^6(X;\mathbb{Z}/m^3)$
\item[] $m^3\dot s_4(E) -m^2(3\dot s_2(E)^2 +4\dot s_1(E)\dot s_3(E)) +12m\dot s_1(E)^2 \dot s_2(E) -6\dot s_1(E)^4
\in H^8(X;\mathbb{Z}/m^4)$
\end{itemize}

The classes $s_k(E)$ are related to the Chern classes $c_k(E)$ via the
Newton polynomials:
\[s_k(E)=Q_k(c_1(E),...,c_k(E))\in H^{2k}(X;\ZZZ),\]
which express the symmetric power sum functions in terms of elementary symmetric functions $\sigma_i$. The first four Newton polynomials are
\begin{itemize}
 \item []$Q_1(\sigma_1)=\sigma_1$,
 \item []$Q_2(\sigma_1,\sigma_2)=\sigma_1^2-2 \sigma_2,$
 \item []$Q_3(\sigma_1,\sigma_2,\sigma_3)=\sigma_1^3-3\sigma_1\sigma_2+3\sigma_3$,
 \item []$Q_1(\sigma_1,\sigma_2,\sigma_3,\sigma_4)=
\sigma_1^4-4\sigma_1^2\sigma_2 +4\sigma_1\sigma_3+2\sigma_2^2-4\sigma_4.$
\end{itemize}

By expressing  $s_k$
in terms of Chern classes we obtain a sequence of
 characteristic classes of $E$ which are invariants of  $O_E$.
 The first four classes in the sequence are:
\begin{itemize}
\item[(1)] $\dot c_1(E)\in H^2(X;\mathbb{Z}/m)$
\item[(2)] $(m-1)\dot c_1(E)^2-2m \dot c_2(E)\in H^4(X;\mathbb{Z}/m^2)$
\item[(3)] $(m^2-3m+2) \dot c_1(E)^3-(3m^2-6m)\dot c_1(E)c_2(E)+3m^2 \dot c_3(E)\in H^6(X;\mathbb{Z}/m^3)$
\item[(4)] $(m^3-7m^2+12m-6)\dot c_1(E)^4-(4m^3-24m^2+24m) \dot c_1(E)^2 \dot c_2(E)+\\
(4m^3-12m^2)\dot c_1(E)\dot c_3(E)+(2m^3-12m^2)\dot c_2(E)^2-4m^3\dot c_4(E)
\in H^8(X;\mathbb{Z}/m^4)$
\end{itemize}
Here we denote by $\dot c_k(E)$ the image of the Chern class $c_k(E)$ under the coefficient map $H^{2k}(X,\ZZZ)\to H^{2k}(X,\ZZZ/m^k)$.

\begin{corollary}\label{line-bundles}
Let $X$ be a  compact metrizable space and let $L$ and $L'$ be two line bundles over $X$.
If $O_{m+L}\cong O_{m+L'}$ as $C(X)$-algebras, then
$q_n(1,...,1)(c_1(L)^n-c_1(L')^n)$ is divisible by $m^n$ for all $n\geq 1$.
\end{corollary}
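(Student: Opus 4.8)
The plan is to apply Theorem~\ref{thm:poly-obstruction3} directly, after computing the relevant characteristic classes of $m+L$ and $m+L'$ and exploiting the weight-homogeneity of the polynomial $q_n$.

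First I would record that $m+L$ and $m+L'$ lie in $\Vect_{m+1}(X)$, being the direct sums of the trivial bundle of rank $m$ with a line bundle. Since $s_k$ is additive and vanishes on trivial bundles for $k\geq 1$, we get $s_k(m+L)=s_k(L)$ for all $k\geq 1$. For a line bundle $L$ the Chern character is $ch(L)=\exp(c_1(L))$, so comparing with $ch=\sum_{k\geq 0}s_k/k!$ yields $s_k(L)=c_1(L)^k$. Hence $s_k(m+L)=c_1(L)^k$ and similarly $s_k(m+L')=c_1(L')^k$ for $k\geq 1$.

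Next I would exploit the weight structure of $q_n$. Recall that $q_n$ is the weight-$n$ homogeneous component of $W_n\!\left(x_1/1!+\dots+x_n/n!\right)$ when $x_k$ is assigned weight $k$; equivalently, every monomial $x_1^{k_1}\cdots x_n^{k_n}$ occurring in $q_n$ satisfies $k_1+2k_2+\dots+nk_n=n$. Substituting $x_k=t^k$ therefore sends each such monomial to $t^{k_1+2k_2+\dots+nk_n}=t^n$, so that $q_n(t,t^2,\dots,t^n)=q_n(1,\dots,1)\,t^n$. Applying this with $t=c_1(L)$ and with $t=c_1(L')$ gives
\[q_n(s_1(m+L),\dots,s_n(m+L))=q_n(1,\dots,1)\,c_1(L)^n,\]
and likewise for $L'$.

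Finally, if $O_{m+L}\cong O_{m+L'}$ as $C(X)$-algebras, Theorem~\ref{thm:poly-obstruction3} asserts that $q_n(s_1(m+L),\dots,s_n(m+L))-q_n(s_1(m+L'),\dots,s_n(m+L'))$ is divisible by $m^n$ in $H^{2n}(X,\ZZZ)$; by the computation above this difference equals $q_n(1,\dots,1)\,(c_1(L)^n-c_1(L')^n)$, which is the claim. As this is a corollary, the argument is essentially routine; the only delicate point is the collapsing of the substitution $x_k\mapsto c_1(L)^k$, which is precisely the weight-homogeneity observation, everything else being a direct appeal to the additivity of $s_k$ and to Theorem~\ref{thm:poly-obstruction3}.
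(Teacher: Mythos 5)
Your proof is correct and follows essentially the same route as the paper: the identity $s_k(L)=c_1(L)^k$ for line bundles, the weight-homogeneity of $q_n$ giving $q_n(s_1(L),\dots,s_n(L))=q_n(1,\dots,1)c_1(L)^n$, and then a direct appeal to Theorem~\ref{thm:poly-obstruction3}. The only difference is that you spell out the stability step $s_k(m+L)=s_k(L)$, which the paper leaves implicit.
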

 \begin{proof} If $L$ is a line bundle, then $s_k(L)=c_1(L)^k$. Since all monomials of $q_n$ have
   weight $n$,  $q_n(s_1(L),...,s_n(L))=q_n(1,...,1)c_1(L)^n$.
 \end{proof}
 Let us note that there is a more direct way to derive cohomology invariants for $O_E$.
\begin{proposition}\label{cor:fffirst-obstruction_O_E}
Let $X$ be  a  compact metrizable space.
Then $s_n(p_n([\widetilde E]))$ is an element of $H^{2n}(X,\ZZZ)$ whose image in  $H^{2n}(X,\ZZZ/m^n)$ is an invariant of $O_E$,   $n\geq 1$.
\end{proposition}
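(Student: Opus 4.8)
The plan is to reduce the claim to the relation furnished by Theorem~\ref{Thm:complete-obstruction_O_E} together with the polynomial identity of Lemma~\ref{lemma:ABC}(ii), and then to exploit that the additive characteristic class $s_n$ annihilates the $(n+1)$-st power of the augmentation ideal $\widetilde K^0(X)$ for degree reasons. First I would observe that $p_n$ has integer coefficients and no constant term, so $p_n([\widetilde E])$ lies in $\widetilde K^0(X)$ and $s_n(p_n([\widetilde E]))$ is a well-defined element of $H^{2n}(X,\ZZZ)$; here $s_n$ is extended from vector bundles to all of $K^0(X)$ as a stable additive characteristic class.

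Assume $O_E\cong O_F$ as $C(X)$-algebras. By Theorem~\ref{Thm:complete-obstruction_O_E} there is $h\in\widetilde K^0(X)$ with $[\widetilde E]=[\widetilde F]+mh+[\widetilde F]h$. Substituting $x=[\widetilde F]$, $y=h$ into Lemma~\ref{lemma:ABC}(ii) yields, in $\widetilde K^0(X)$,
\[ p_n([\widetilde E])=p_n([\widetilde F])+m^n v_n(h)+s_{n+1}([\widetilde F],h), \]
and, since every monomial of $s_{n+1}$ has total degree $\geq n+1$, the last term lies in $\widetilde K^0(X)^{n+1}$. Applying $s_n$ and using its additivity I would then obtain
\[ s_n(p_n([\widetilde E]))-s_n(p_n([\widetilde F]))=m^n\,s_n(v_n(h))+s_n\big(s_{n+1}([\widetilde F],h)\big). \]

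The key step, which I expect to be the crux (though not a hard one), is that $s_n$ vanishes on $\widetilde K^0(X)^{n+1}$. This should follow from the multiplicative law \eqref{s-classes}: for $a,b\in K^0(X)$ one has $s_k(ab)=\sum_{i+j=k}\tfrac{k!}{i!j!}s_i(a)s_j(b)$, and for $a\in\widetilde K^0(X)$ the rank term $s_0(a)$ vanishes. Iterating this over a product $a_1\cdots a_{n+1}$ of elements of $\widetilde K^0(X)$ leaves only summands in which each factor contributes some $s_{i_j}$ with $i_j\geq 1$; every such summand sits in total degree $\geq n+1$, so $s_k(a_1\cdots a_{n+1})=0$ for all $k\leq n$, and in particular $s_n(s_{n+1}([\widetilde F],h))=0$.

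Combining these, $s_n(p_n([\widetilde E]))-s_n(p_n([\widetilde F]))=m^n\,s_n(v_n(h))$ is divisible by $m^n$ in $H^{2n}(X,\ZZZ)$, so the two classes agree in $H^{2n}(X,\ZZZ/m^n)$, which is precisely the asserted invariance. I would emphasize that this argument uses only the universal identity of Lemma~\ref{lemma:ABC}(ii) and a cohomological degree count, with no nilpotency or finite-dimensionality hypothesis on $X$, which is what permits the statement for arbitrary compact metrizable $X$.
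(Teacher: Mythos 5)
Your proposal is correct and follows essentially the same route as the paper: invoke Theorem~\ref{Thm:complete-obstruction_O_E} to write $[\widetilde E]=[\widetilde F]+mh+[\widetilde F]h$, apply Lemma~\ref{lemma:ABC}(ii) to get $p_n([\widetilde E])-p_n([\widetilde F])=m^n v_n(h)+s_{n+1}([\widetilde F],h)$ with the error term in $\widetilde K^0(X)^{n+1}$, and then use the multiplicative law \eqref{s-classes} to see that $s_n$ kills $\widetilde K^0(X)^{n+1}$. The only difference is that you spell out the degree-counting argument for this last vanishing (using $s_0=0$ on rank-zero classes), which the paper leaves as a one-line remark.
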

 \begin{proof}
Suppose that $O_E\cong O_F$ as $C(X)$-algebras. Then, we saw in the proof of the
 Theorem~\ref{thm:poly-obstruction2} that
$p_n([\widetilde E])-p_n([\widetilde F])=m^n c +d $ for some $c\in \widetilde K^0(X)$ and
$d\in  \widetilde K^0(X)^{n+1}$.
  From the multiplicative properties of the $s_n$-classes \eqref{s-classes}, one deduces that $s_n$ vanishes on $\widetilde K^0(X)^{n+1}$. Therefore $s_n(p_n([\widetilde E]))-s_n(p_n([\widetilde F]))=m^ns_n(c)$.
\end{proof}
We note that this is not really a new invariant, since it is not hard to prove that
\[s_n(p_n([\widetilde E])=\ell(n)q_n(s_1(E),...,s_n(E)).\]
Theorem~\ref{thm:poly-obstruction3} shows that we can remove the factor $\ell(n)$ and hence obtain a finer
invariant.

\section{Proof of Theorem~\ref{thm:main_result-intro1}}\label{section:3}
 Recall that $\OOO_{m+1}(X)$ denotes the set of isomorphism classes
of unital separable $C(X)$-algebras with all fibers isomorphic to $O_{m+1}$. These $C(X)$-algebras are automatically locally trivial if $X$ is finite dimensional.

For a discrete abelian group $G$ and $n\geq 1$ let $K(G,n)$ be an Eilenberg-MacLane space.
It is a connected CW complex $Y$ having just one nontrivial homotopy group $\pi_n(Y)\cong G$.
A $K(G,n)$ space is unique up to homotopy equivalence. For a CW complex $X$, there is an
isomorphism
$H^n(X,G)\cong [X,K(G,n)]$.

Let us recall from  \cite{Hatcher} that if $Y$ is a connected CW complex with $\pi_1(Y)$ acting trivially
on $\pi_n(Y)$ for $n\geq 1$, then $Y$ admits a Postnikov tower
\[\cdots\to Y_n\to Y_{n-1}\to \cdots \to Y_2\to Y_1=K(\pi_1(Y),1).\]
Each space $Y_n$ carries the homotopy groups of $Y$ up to level $n$.
More precisely, there exist compatible maps $Y\to Y_n$ that induce isomorphisms
$\pi_i(Y)\to \pi(Y_n)$ for $i\leq n$ and $\pi_i(Y_n)=0$ for $i>n$.
Each map $Y_n \to Y_{n-1}$
is a fibration with fiber $K(\pi_n(Y),n)$. Thus $Y_n$ can be thought as a twisted product
of $Y_{n-1}$ by $K(\pi_n(Y),n)$.
The space $Y$ is weakly homotopy equivalent to the projective limit
$\varprojlim Y_n$.

\begin{proposition}\label{Postnikov} Let $X$ be a finite  connected  CW complex and let $m\geq 1$ be an integer.
Then $|\OOO_{m+1}(X)|\leq |\widetilde H^{even}(X,\ZZZ/m)|$.
\end{proposition}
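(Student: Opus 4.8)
The plan is to convert the classification into a homotopy-theoretic counting problem and then climb the Postnikov tower of the relevant classifying space, charging each stage to a cohomology group with $\ZZZ/m$ coefficients.

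First I would use local triviality: as recalled above, every algebra in $\OOO_{m+1}(X)$ is the section algebra of a locally trivial fiber bundle over $X$ with fiber $O_{m+1}$ and structure group $G:=\Aut(O_{m+1})$ in the point-norm topology, and two such algebras are isomorphic as $C(X)$-algebras exactly when the bundles are isomorphic. Hence $\OOO_{m+1}(X)$ is in natural bijection with $[X,BG]$, and it suffices to bound $|[X,BG]|$. I would then invoke the computation of the homotopy groups of $G$ from \cite{Dad:bundles-fdspaces}, namely $\pi_{2k-1}(G)\cong\ZZZ/m$ and $\pi_{2k}(G)=0$ for all $k\geq1$, together with the connectedness $\pi_0(G)=0$. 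It follows that $BG$ is simply connected with $\pi_{2k}(BG)\cong\ZZZ/m$ and $\pi_{2k+1}(BG)=0$ for every $k\geq1$, so that its homotopy is concentrated in even degrees, each copy being $\ZZZ/m$.

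Next I would fix a Postnikov tower $\cdots\to P_n\to P_{n-1}\to\cdots$ of $BG$ with $P_1$ a point, each stage $P_n\to P_{n-1}$ being a principal fibration with fiber $K(\pi_n(BG),n)$. Since $\dim X=d$, obstruction theory gives $[X,BG]=[X,P_d]$, and since the odd homotopy of $BG$ vanishes we have $P_{2k+1}=P_{2k}$, so only the even stages contribute. Each stage induces a map of pointed sets $[X,P_n]\to[X,P_{n-1}]$, and for a fixed class in the target the (possibly empty) preimage is a quotient of the set of vertical-homotopy classes of lifts, on which $H^n(X,\pi_n(BG))$ acts transitively. Consequently this preimage has at most $|H^n(X,\pi_n(BG))|$ elements: a single element when $n$ is odd, and at most $|H^{2k}(X,\ZZZ/m)|$ when $n=2k$.

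Multiplying these per-stage bounds up the tower, starting from $[X,P_2]=H^2(X,\ZZZ/m)$, I obtain
\[
|\OOO_{m+1}(X)|=|[X,P_d]|\leq\prod_{k=1}^{\lfloor d/2\rfloor}|H^{2k}(X,\ZZZ/m)|=\Big|\bigoplus_{k\geq1}H^{2k}(X,\ZZZ/m)\Big|=|\widetilde H^{even}(X,\ZZZ/m)|,
\]
the reduced group arising because the tower begins in degree two, so that the $H^0$-term never enters. I expect the main obstacle to be the per-stage estimate of the previous paragraph: one must carefully justify, via the standard obstruction theory of a principal $K(\pi,n)$-fibration, that the fiber of $[X,P_n]\to[X,P_{n-1}]$ over a fixed homotopy class has cardinality at most $|H^n(X,\pi_n(BG))|$, using that $H^n(X,\pi_n(BG))$ acts transitively on vertical-homotopy classes of lifts while the passage to free homotopy classes in $[X,P_n]$ can only identify such classes. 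A minor point, taken from \cite{Dad:bundles-fdspaces}, is the vanishing $\pi_0(G)=0$, which makes $BG$ simply connected and lets the Postnikov machinery and the identification of based with free homotopy classes apply without change.
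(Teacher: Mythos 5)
Your proof is correct and takes essentially the same route as the paper: identify $\OOO_{m+1}(X)$ with $[X,B\Aut(O_{m+1})]$, use the computation of $\pi_*(\Aut(O_{m+1}))$ to see that the Postnikov tower of the classifying space has only even stages with fibers $K(\ZZZ/m,2k)$, bound the fibers of $[X,P_{2k}]\to[X,P_{2k-2}]$ by $|H^{2k}(X,\ZZZ/m)|$, and truncate the tower using $\dim X=d$. The only cosmetic differences are that the paper justifies the per-stage bound via an exact sequence of pointed sets (for all choices of basepoint) rather than via the transitive $H^{2k}$-action on vertical homotopy classes of lifts, and truncates via Whitehead's theorem at $Y_{2n}$ for $n>d/2$ rather than at $P_d$.
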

\begin{proof}
Let $Y$ be a CW complex weakly homotopy equivalent to the classifying space $B\Aut(O_{m+1})$
of principal $\Aut(O_{m+1})$-bundles.
Then, there are bijections
\[\OOO_{m+1}(X)\cong[X,B\Aut(O_{m+1})]\cong [X,Y].\]
The homotopy groups of $\Aut(O_{m+1})$ were computed in \cite{Dadarlat:homotopy-aut}.
That calculation gives
  $\pi_{2k-1}(Y)=0$ and $\pi_{2k}(Y)=\ZZZ/m$, $k\geq 1$.
Consequently, the Postnikov tower of $Y$ reduces to its even terms
\[\cdots\to Y_{2k}\to Y_{2k-2}\to \cdots \to Y_2=K(\ZZZ/m,2).\]
The   homotopy sequence of the fibration  $K(\ZZZ/m,2k)\to  Y_{2k}\to Y_{2k-2}$ gives
for all choices of the base points an exact sequence of sets
\[ [X,K(\ZZZ/m,2k)]\to [X,Y_{2k}]\to [X,Y_{2k-2}]. \]
This shows that
\[|[X,Y_{2k}]|\leq |[X,Y_{2k-2}]|\cdot |H^{2k}(X,\ZZZ/m)|.\]
 By Whitehead's theorem,  if $n> \dim(X)/2$, then the map $Y\to Y_{2n}$ induces a bijection $[X,Y]\cong [X,Y_{2n}]$.
It follows that \[|[X,Y]|\leq \prod_{1\leq k\leq n} |H^{2k}(X,\ZZZ/m)|=|\widetilde H^{even}(X,\ZZZ/m)|.\qedhere\]
\end{proof}
We consider  a commutative ring $R$ which admits a filtration by ideals
$$\cdots \subset R_{k+1}\subset R_k\subset \cdots \subset R_1=R$$ with the property that $R_q R_k\subset R_{q+k}$ and there is $n$
such that $R_{n+1}=\{0\}$.
On $R$ we consider the following equivalence relation: $a\sim b$ if there is $h\in R$ such that $a=b+mh+bh$. Let us denote by
$R/{\sim}$ the set of equivalence classes.

\begin{lemma}\label{countK0} Let $R$ be a filtered commutative  ring with $R_{n+1}=\{0\}$. Suppose that $\Tor(R_k/R_{k+1},\ZZZ/m)=0$
for all $k\geq 1$.
Then
$|R/{\sim}|=|R\otimes \ZZZ/m|=\prod_{k\geq 1}|R_k/R_{k+1}\otimes \ZZZ/m|$.
\end{lemma}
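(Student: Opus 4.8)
The plan is to prove the two equalities separately, dispatching the easier identity $|R\otimes\ZZZ/m|=\prod_{k\geq 1}|R_k/R_{k+1}\otimes\ZZZ/m|$ first and then establishing the harder identity $|R/{\sim}|=|R\otimes\ZZZ/m|$ by induction along the filtration.

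For the product formula I would tensor each short exact sequence $0\to R_{k+1}\to R_k\to R_k/R_{k+1}\to 0$ with $\ZZZ/m$. Since $\Tor(R_k/R_{k+1},\ZZZ/m)=0$ by hypothesis, the resulting sequence $0\to R_{k+1}\otimes\ZZZ/m\to R_k\otimes\ZZZ/m\to (R_k/R_{k+1})\otimes\ZZZ/m\to 0$ stays short exact, so $|R_k\otimes\ZZZ/m|=|R_{k+1}\otimes\ZZZ/m|\cdot|(R_k/R_{k+1})\otimes\ZZZ/m|$. Multiplying these down from $k=n$ (where $R_{n+1}=\{0\}$) to $k=1$ gives the claim. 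The very same computation applies to any subquotient of the filtration, a fact I will reuse.

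For the main equality I would induct on the largest index $\ell$ with $R_\ell\neq\{0\}$, the case $R=\{0\}$ being trivial. Set $I:=R_\ell$, so that $IR\subset R_{\ell+1}=\{0\}$, and pass to $\bar R:=R/I$, whose filtration quotients are those of $R$ with the top one removed; thus $\bar R$ satisfies the hypotheses and has strictly smaller length. First I record that $\sim$ is genuinely an equivalence relation: rewriting $a=b+mh+bh$ as $m+a=(m+b)(1+h)$ in the unitalization and noting that $1+h$ is a unit (as $R$ is nilpotent, $R^{\ell+1}\subset R_{\ell+1}=\{0\}$) identifies $\sim$ with the orbit relation of the group $1+R$ acting by multiplication. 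The quotient map $R\to\bar R$ then induces a surjection $R/{\sim}\to\bar R/{\sim}$, and the heart of the argument is to show that every fiber is in bijection with $I\otimes\ZZZ/m=I/mI$. Concretely, I would check that the fiber over the class of $\bar b$ is parametrized by $t\mapsto[b+t]$ for $t\in I$, and that $[b+t]=[b+s]$ precisely when $t-s\in J_b:=(m+b)R\cap I$; here the relations $sh\in IR=\{0\}$ make the bookkeeping collapse. It then remains to identify $J_b=mI$.

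The inclusion $mI\subset J_b$ is immediate, since $mt=(m+b)t$ for $t\in I$ (because $bt\in IR=\{0\}$). The reverse inclusion $J_b\subset mI$ is the main obstacle, and it is exactly where the torsion hypothesis is indispensable. Given $x=(m+b)h\in I$, I would bootstrap up the filtration: assuming $h\in R_k$ with $k<\ell$, one has $bh\in R_1R_k\subset R_{k+1}$ and $x\in I\subset R_{k+1}$, so reducing modulo $R_{k+1}$ yields $m\bar h=0$ in $R_k/R_{k+1}$; since $\Tor(R_k/R_{k+1},\ZZZ/m)=0$ says this group has no $m$-torsion, we conclude $h\in R_{k+1}$. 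Iterating from $k=1$ forces $h\in R_\ell=I$, whence $bh\in IR=\{0\}$ and $x=mh\in mI$. With $J_b=mI$ in hand, every fiber has cardinality $|I\otimes\ZZZ/m|$, so $|R/{\sim}|=|\bar R/{\sim}|\cdot|I\otimes\ZZZ/m|$. Applying the induction hypothesis to $\bar R$ and the product formula to both $R$ and $\bar R$ gives $|\bar R/{\sim}|=|\bar R\otimes\ZZZ/m|$ together with $|R\otimes\ZZZ/m|=|\bar R\otimes\ZZZ/m|\cdot|I\otimes\ZZZ/m|$, and these combine to complete the induction.
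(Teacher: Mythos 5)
Your proof is correct, and it takes a genuinely different route from the paper's. The paper proves the equality $|R/{\sim}|=|R\otimes\ZZZ/m|$ "bottom-up": it fixes subsets $A_k\subset R_k$ mapping bijectively onto $R_k/R_{k+1}\otimes\ZZZ/m$ and shows that $(a_1,\dots,a_n)\mapsto [a_1+\cdots+a_n]$ gives a bijection $A_1\times\cdots\times A_n\to R/{\sim}$, with injectivity and surjectivity each proved by a separate induction along the filtration. You instead argue "top-down" by induction on the filtration length: quotient by the top ideal $I=R_\ell$, observe that $R/{\sim}\to (R/I)/{\sim}$ is surjective, and compute that every fiber is a copy of $I/\bigl((m+b)R\cap I\bigr)=I/mI=I\otimes\ZZZ/m$, which yields the recursion $|R/{\sim}|=|(R/I)/{\sim}|\cdot|I\otimes\ZZZ/m|$. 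The essential mechanism is the same in both arguments: the hypothesis $\Tor(R_k/R_{k+1},\ZZZ/m)=0$ is used to push an auxiliary element $h$ with $mh\in R_{k+1}$ (modulo higher-order terms) one step deeper into the filtration --- your bootstrap proving $(m+b)R\cap I=mI$ is exactly the paper's step, inside its injectivity argument, showing the correcting element $h$ actually lies in $R_k$. What your organization buys is economy and transparency: identifying $\sim$ with the orbit relation of the group $1+R$ makes transitivity obvious, the torsion hypothesis enters in a single isolated computation ($J_b=mI$), and one induction replaces two. What the paper's proof buys in exchange is an explicit normal form for the equivalence classes (each class contains exactly one sum $a_1+\cdots+a_n$ with $a_k\in A_k$), which is more constructive, though not needed for the cardinality statement itself.
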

\begin{proof} Using the exact sequence for $\Tor$, we observe first that $\Tor(R_1/R_{k},\ZZZ/m)=0$
for all $k\geq 1$ and hence if $h\in R$ satisfies $mh\in R_k$ for
some $k$, then $h\in R_k$.
Using the exact sequences
$$0\to R_{k+1}\otimes \ZZZ/m\to R_k\otimes \ZZZ/m \to R_k/R_{k+1}\otimes \ZZZ/m\to 0$$
we see that $|R\otimes \ZZZ/m|=\prod_{k\geq 1}|R_k/R_{k+1}\otimes \ZZZ/m|$.
For each $k$ choose a finite subset $A_k\subset R_k$ such that the quotient
map $\pi_k:R_k\to R_k/R_{k+1}\otimes \ZZZ/m$ induces a bijective map
$\pi_k:A_k \to R_k/R_{k+1}\otimes \ZZZ/m$.
Consider the map $\eta:A_1\times A_2 \times \cdots \times A_n\to R$ defined by $\eta(a_1,\dots,a_n)=a_1+\cdots +a_n$.
To prove the proposition it suffices to show that $\eta$ induces a
bijection of $\bar{\eta}:A_1\times A_2 \times \cdots \times A_n\to R/{\sim}$.
First we verify that $\bar{\eta}$ is injective. Let $a_k,b_k\in A_k$, $1\leq k \leq n$
and assume that
\[a_1+\cdots+a_n\sim b_1+\cdots+b_n.\]
We must show that $a_k=b_k$ for all $k$.
Set $r_k=a_k+\cdots+a_n$ and $s_k=b_k+\cdots +b_n$. Then $r_k,s_k\in R_k$.
Since $a_1+r_2\sim b_1+ s_2$ there is $h_1\in R_1$
such that
\[a_1+r_2=b_1+ s_2+mh_1+b_1h_1+ s_2h_1\]
and hence $a_1-b_1-mh_1\in R_2$. Therefore $\pi_1(a_1)=\pi_1(b_1)$
and so $a_1=b_1$.
Arguing by induction, suppose that we have shown that $a_i=b_i$
for all $i\leq k-1$. Set $w=a_1+\cdots+a_{k-1}$.
By assumption $w+r_{k}\sim w+s_k$ and hence there
is $h\in R_1$ such that
\begin{equation}\label{rec}
w+r_k=w+s_k+mh+wh+s_kh.
\end{equation}
Let us notice that if $h\in R_i$ for some $i\leq k-1$, then
equation \eqref{rec} shows that $mh=(r_k-s_k)-wh-s_kh\in R_k\cup R_{i+1}=R_{i+1}$
and hence $h\in R_{i+1}$. This shows that in fact
$h=h_k\in R_k$. From equation \eqref{rec} we obtain
\[a_k-b_k-mh_k=s_{k+1}-r_{k+1}+(w+s_{k})h_k\in R_{k+1}.\]
This shows that $\pi_k(a_k)=\pi_k(b_k)$ and hence $a_k=b_k$.

It remains to verify that $\bar{\eta}$ is surjective.
In other words for any given $x_1\in R_1$ we must find $a_k\in A_k$, $1\leq k\leq n$,
such that $x_1\sim a_1+\cdots+a_n$. We do this by induction,
showing that for each $k\geq 1$ there exist $a_i\in A_i$, $1\leq i\leq k$ and  $x_{k+1}\in R_{k+1}$
such that
\[x_1\sim a_1+\cdots+a_k+x_{k+1},\]
and observe that $x_{n+1}=0$ since $R_{n+1}=\{0\}$.

By the definition of $A_1$ there is $a_1\in A_1$ such that
$\pi_1(a_1)=\pi_1(x_1)\in R_1/R_2\otimes \ZZZ/m$. Therefore there exist $h_1\in R_1$ and $y_2\in R_2$ such that $a_1=x_1+mh_1+y_2$.
Setting $x_2=x_1h_1-y_2\in R_2$ we obtain
\[x_1\sim x_1+mh_1+x_1h_1=a_1+x_2.\]
Suppose now that we found $a_i\in A_i$, $1\leq i\leq k-1$ and $x_k\in R_k$ such that
\[x_1\sim a_1+\cdots+a_{k-1}+x_{k}.\]
Let $a_k\in A_k$ be such that $\pi_k(a_k)=\pi_k(x_k)$. Then there exist
$h_k\in R_k$ and $y_{k+1}\in R_{k+1}$ such that
$a_k=x_k+mh_k+y_{k+1}$. Thus
\begin{align*}
    a_1+\cdots+a_{k-1}+x_{k}
    &\sim a_1+\cdots+a_{k-1}+x_{k}+mh_k+(a_1+\cdots+a_{k-1}+x_{k})h_k\\
    &=a_1+\cdots+a_{k-1}+a_{k}+x_{k+1}
  \end{align*}
  where $x_{k+1}=( a_1+\cdots+a_{k-1}+x_{k})h_k-y_{k+1}\in R_{k+1}$.
\end{proof}

\begin{theorem}\label{thm:main_result}
Let $X$ be a finite  connected CW complex of dimension $d$ and let $m\geq 1$ be an integer.  Suppose that $\Tor(H^*(X,\ZZZ),\ZZZ/m)=0$
and that $m\geq \lceil (d-3)/2\rceil$. Then:
\begin{itemize}
 \item[(i)] Any separable unital $C(X)$-algebra with fiber $O_{m+1}$ is isomorphic to $O_E$ for some $E\in Vect_{m+1}(X)$.
 \item[(ii)] If $E,F\in Vect_{m+1}(X)$, then $O_E\cong O_F$ as $C(X)$-algebras if and only if   there is $h\in  K^0(X)$  such that
$1-[E]=(1-[F])h$.
 \item[(iii)] The cardinality of $\OOO_{m+1}(X)$ is equal to  $|\widetilde K^0(X)\otimes \ZZZ/m|=|\widetilde H^{even}(X,\ZZZ/m)|$.
\end{itemize}
\end{theorem}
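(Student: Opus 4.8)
The plan is to prove the three assertions of Theorem~\ref{thm:main_result} by combining the enumeration from Proposition~\ref{Postnikov}, the abstract counting from Lemma~\ref{countK0}, and the classification results already established in Section~\ref{section:classification}. Assertion (ii) is immediate: it is just a restatement of the first part of Theorem~\ref{Thm:complete-obstruction_O_E}, which holds for every compact metrizable $X$ with no extra hypotheses, so there is nothing new to prove there. The real content is in (i) and (iii), and the strategy is to show that the map
\[
\Vect_{m+1}(X)\to \OOO_{m+1}(X),\qquad E\mapsto [O_E],
\]
is surjective with image of the correct cardinality, so that a counting argument forces it to exhaust $\OOO_{m+1}(X)$.

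First I would identify the target cardinality. By Proposition~\ref{Postnikov} we already have the upper bound $|\OOO_{m+1}(X)|\leq |\widetilde H^{even}(X,\ZZZ/m)|$. The hypothesis $\Tor(H^*(X,\ZZZ),\ZZZ/m)=0$ lets me pass freely between integral cohomology mod $m$ and $K$-theory mod $m$: under vanishing $m$-torsion the Atiyah--Hirzebruch spectral sequence for $K^0(X)$ collapses after tensoring with $\ZZZ/m$ (all differentials are torsion and hence killed), giving $|\widetilde K^0(X)\otimes \ZZZ/m|=|\widetilde H^{even}(X,\ZZZ/m)|$; this is the second equality claimed in (iii). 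Next I would count the image of the classification map. By the first part of Theorem~\ref{Thm:complete-obstruction_O_E} together with the discussion opening Section~\ref{section:2}, two bundles $E,F\in\Vect_{m+1}(X)$ satisfy $O_E\cong O_F$ as $C(X)$-algebras if and only if $[\widetilde E]\sim [\widetilde F]$, where $\sim$ is the relation $a\sim b\iff a=b+mh+bh$ for some $h\in\widetilde K^0(X)$. Thus the image of $\Vect_{m+1}(X)\to\OOO_{m+1}(X)$ is in bijection with the image of the assignment $E\mapsto [\widetilde E]\in \widetilde K^0(X)/{\sim}$.

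To compute $|\widetilde K^0(X)/{\sim}|$ I would apply Lemma~\ref{countK0} with $R=\widetilde K^0(X)$ filtered by $R_k=K^0_{2k}(X)$ from \eqref{eq:filtration}; the multiplicativity $K^0_{2q}(X)K^0_{2k}(X)\subset K^0_{2q+2k}(X)$ and the vanishing $\widetilde K^0(X)^{\lfloor d/2\rfloor+1}=\{0\}$ supply the required filtration hypotheses, while the no-$m$-torsion assumption gives $\Tor(R_k/R_{k+1},\ZZZ/m)=0$ (the subquotients $K^0_{2k}(X)/K^0_{2k+2}(X)$ are subquotients of $H^{2k}(X,\ZZZ)$ up to the collapse of the spectral sequence). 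Lemma~\ref{countK0} then yields $|\widetilde K^0(X)/{\sim}|=|\widetilde K^0(X)\otimes\ZZZ/m|$. It remains to prove that $E\mapsto[\widetilde E]$ realizes \emph{every} element of $\widetilde K^0(X)$ up to the stabilization that makes the rank equal to $m+1$: given any $\xi\in\widetilde K^0(X)$ one can choose a bundle $E$ of sufficiently large rank with $[\widetilde E]=\xi$ and then, using that stably isomorphic bundles of rank above $\dim(X)/2$ are isomorphic, reduce the rank to exactly $m+1$ when $m\geq\lceil(d-3)/2\rceil$. This step is where the dimension hypothesis $m\geq\lceil(d-3)/2\rceil$ enters essentially, exactly as in the necessity example with $S^8$ in the introduction. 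Combining, the image of $\Vect_{m+1}(X)$ in $\OOO_{m+1}(X)$ already has cardinality $|\widetilde K^0(X)\otimes\ZZZ/m|=|\widetilde H^{even}(X,\ZZZ/m)|$, which matches the upper bound of Proposition~\ref{Postnikov}; hence the map is surjective, proving (i) and (iii) simultaneously.

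The main obstacle is the surjectivity of $E\mapsto[\widetilde E]$ onto all of $\widetilde K^0(X)$ by bundles of the \emph{fixed} rank $m+1$: realizing an arbitrary virtual class by an honest bundle of small constant rank requires the stable-range bound $m\geq\lceil(d-3)/2\rceil$, and this is precisely the point where the homotopy-theoretic input (representability of $K$-theory classes by bundles in the stable range, and the computation $\pi_*(\Aut(O_{m+1}))$ underlying Proposition~\ref{Postnikov}) must be matched carefully against the filtration-counting of Lemma~\ref{countK0}. A secondary technical point is justifying the equality $|\widetilde K^0(X)\otimes\ZZZ/m|=|\widetilde H^{even}(X,\ZZZ/m)|$ cleanly, for which the vanishing of $m$-torsion in $H^*(X,\ZZZ)$ is exactly the condition that makes the Atiyah--Hirzebruch filtration subquotients agree with cohomology after reduction mod $m$.
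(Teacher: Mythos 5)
Your proposal is correct and follows the paper's own proof essentially step for step: part (ii) is quoted from Theorem~\ref{Thm:complete-obstruction_O_E}, the upper bound $|\OOO_{m+1}(X)|\leq|\widetilde H^{even}(X,\ZZZ/m)|$ comes from Proposition~\ref{Postnikov}, the count $|\widetilde K^0(X)/{\sim}|=|\widetilde K^0(X)\otimes\ZZZ/m|$ comes from Lemma~\ref{countK0} applied to the skeletal filtration $R_k=K^0_{2k}(X)$ with the Tor hypothesis checked via the Atiyah--Hirzebruch spectral sequence, and the surjectivity of $\Vect_{m+1}(X)\to\widetilde K^0(X)$ in the stable range $m\geq\lceil(d-3)/2\rceil$ is exactly the paper's condition (a), cited there from Husemoller. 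One small presentational caveat: the spectral-sequence step should be carried out after localizing at each prime $p$ dividing $m$ (tensoring with the torsion-free ring $\ZZZ_{(p)}$, as the paper does), rather than by tensoring the spectral sequence with $\ZZZ/m$, since torsion differentials are not literally killed by $\otimes\,\ZZZ/m$; the conclusion you want, $\Tor\bigl(K^0_{2q}(X)/K^0_{2q+2}(X),\ZZZ/m\bigr)=0$ and the identification of the subquotients with $H^{2q}(X,\ZZZ)\otimes\ZZZ/m$, is unchanged.
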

\begin{proof} Part (ii) is already contained in Theorem~\ref{Thm:complete-obstruction_O_E} but
we state it again nevertheless since a new proof of the implication  $(1-[E])\widetilde K^0(X)=(1-[F])\widetilde K^0(X)$ $\Rightarrow$
$O_E\cong O_F$
 is given here under the assumptions from the statement of the theorem.
   Recall that we defined an equivalence relation on $\widetilde K^0(X)$ by $a\sim b$ if and only
if $a=b+mh+bh$ for some $h\in \widetilde K^0(X)$.
 Let $\gamma: Vect_{m+1}(X)\to \widetilde K^0(X)/{\sim}$ be the map which takes $E$
 to the equivalence class of $[\widetilde E]=[E]-m-1$. We saw in Section~\ref{section:2} that
$1-[E]=(1-[F])k$ for some $k\in K^0(X)$  if and only if
$[\widetilde E]\sim [\widetilde F]$ in $\widetilde K^0(X)$, i.e. $\gamma(E)=\gamma(F)$.
Let $\omega: Vect_{m+1}(X) \to \OOO_{m+1}(X)$ the map which takes $E$ to the isomorphism class of the $C(X)$-algebra $O_E$.
We  shall construct a bijective map $\chi$ such that the diagram
\begin{equation*}
\xymatrix{
{Vect_{m+1}(X)}\ar[r]^{\omega}\ar@{->>}[d]_{\gamma}&
{\OOO_{m+1}(X)}\ar@{-->}[dl]^{\chi}\\
{\widetilde K^0(X)/{\sim} }
}\end{equation*}
is commutative.  Let us note that
in order to prove the parts (i), (ii) and (iii) of the theorem it suffices to verify the following four conditions.
\begin{itemize}
\item[(a)] $\gamma$ is surjective;
 \item[(b)] If $\omega(E)=\omega(F)$ then $\gamma(E)=\gamma(F)$;
  \item[(c)] $|\OOO_{m+1}(X)|\leq |\widetilde H^{even}(X,\ZZZ/m)|$;
 \item[(d)] $|\widetilde K^0(X)/{\sim}|=
|\widetilde K^0(X)\otimes \ZZZ/m|=|\widetilde H^{even}(X,\ZZZ/m)|$.
 \end{itemize}
Indeed,  from (a) and (b) we see that there is a well-defined  surjective map $\chi:\mathrm{image}(\omega)\to \widetilde K^0(X)/{\sim} $, given by $\chi(\omega(E))=
\gamma(E)$ and hence $|\widetilde K^0(X)/{\sim}|\leq |\mathrm{image}(\omega)|\leq |\OOO_{m+1}(X)|$. On the other hand
from   (c) and (d) we deduce that
 $|\OOO_{m+1}(X)|\leq |\widetilde K^0(X)/{\sim}|$. Altogether this implies that
 $\omega$ is surjective and $\chi$ is bijective.

It remains to verify the four conditions from above.
If $m\geq  \lceil (d-3)/2 \rceil$,
   then the map $Vect_{m+1}(X)\to \widetilde K^0(X)$, $E\mapsto [E]-m-1$ is surjective
   by \cite[Thm.~1.2]{Hus:fibre} and  this implies  (a).
Condition (b) follows from Theorem~\ref{Thm:complete-obstruction_O_E} and condition (c) was proved in
Proposition~\ref{Postnikov}.
It remains to verify condition (d) using the assumption that
    $\Tor(H^*(X,\ZZZ),\ZZZ/m)=0$. The first step is to use a known argument to deduce the absence of $m$-torsion in the   K-theory of $X$ and its skeleton filtration. We will then appeal to Lemma~\ref{countK0} to conclude the proof.

Let $p$ be a prime which divides $m$. Then $\Tor(H^*(X,\ZZZ),\ZZZ/p)=0$ by assumption.
Let $\ZZZ_{(p)}$ denote $\ZZZ$ localized at $p$, i.e. the subring of $\QQQ$ consisting of all fractions with denominator  prime to $p$.
Let $(E_r,d_r)$ be the Atiyah-Hirzebruch spectral sequence  $H^*(X,\ZZZ)\Rightarrow K^*(X)$.
Recall that $E_2^{s,t}=H^{s}(X,K^t(pt))$ and  $E_\infty^{s,t}=K^{s+t}_s(X)/K^{s+t}_{s+1}(X)$, see \cite{AH}.
Since $\ZZZ_{(p)}$ is torsion free, it follows from the universal coefficient theorem
that the  spectral sequence
$(E_r\otimes \ZZZ_{(p)}, d_r\otimes 1)$ is convergent to $K^*(X)\otimes \ZZZ_{(p)}$.
On the other hand since all the differentials $d_r$ are torsion operators  by \cite[2.4]{AH}
and
since $H^*(X,\ZZZ)$ has no p-torsion, it follows that $d_r\otimes 1=0$ for all $r\geq 2$
and hence $E_2\otimes\ZZZ_{(p)}=E_\infty\otimes\ZZZ_{(p)}$.
Therefore for all $q\geq 0$
\begin{equation}\label{eqn:torsion_control}
 H^{2q}(X,\ZZZ)\otimes \ZZZ_{(p)}\cong \big(K^0_{2q}(X)/K^0_{2q+2}(X)\big)\otimes\ZZZ_{(p)}.
\end{equation}
Since $\Tor(G\otimes\ZZZ_{(p)}, \ZZZ/p)\cong \Tor(G, \ZZZ/p)$ for all finitely generated
abelian groups $G$, it follows that  $\Tor(K^0_{2q}(X)/K^0_{2q+2}(X),\ZZZ/p)=0$ for any
prime $p$ that divides $m$. Therefore for all $q\geq 0$ we have
\[\Tor(K^0_{2q}(X)/K^0_{2q+2}(X),\ZZZ/m)=0.\]
This enables us to apply
Lemma~\ref{countK0}  for the ring $R=\widetilde K^0(X)= K_2^0(X)$ filtered  by the ideals
$R_q=\widetilde K^0_{2q}(X)$ to obtain that
\[|\widetilde K^0(X)/{\sim}|=|\widetilde K^0(X)\otimes \ZZZ/m|=\prod_{q\geq 1}|K^0_{2q}(X)/K^0_{2q+2}(X)\otimes \ZZZ/m|.\]
Since $\Tor(H^*(X,\ZZZ),\ZZZ/m)=0$, we have $H^{2q}(X,\ZZZ/m)\cong H^{2q}(X,\ZZZ)\otimes \ZZZ/m$.
From equation \eqref{eqn:torsion_control} we deduce that
\[H^{2q}(X,\ZZZ)\otimes \ZZZ/m\cong \big(K^0_{2q}(X)/K^0_{2q+2}(X)\big)\otimes\ZZZ/m.\]
This completes the proof (d).
\end{proof}
\section{Proof of Theorem~\ref{thm:main_result2-intro}}\label{section:2algebra}
\begin{lemma}\label{lemma:C} Let $R$ be a filtered commutative  ring with $R_{n+1}=\{0\}$.
Suppose that  $\Tor(R_k/R_{k+1},\ZZZ/m)=0$ for all $k\geq 1$ and that $m$ and $n!$ are relatively prime.  If $a,b\in R$  and $p_n(a)-p_n(b)\in m^nR$, then $a\sim b$.
\end{lemma}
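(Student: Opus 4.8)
The plan is to prove the nontrivial (converse) direction by successive approximation along the filtration $R=R_1\supset R_2\supset\cdots\supset R_{n+1}=\{0\}$, correcting $b$ one filtration level at a time inside its $\sim$-class until it becomes equal to $a$. The heuristic behind this is that after inverting $m$ one has $p_n(x)=\ell(n)m^n\log(1+x/m)$, so $p_n(a)-p_n(b)=\ell(n)m^n\log\frac{m+a}{m+b}$, and divisibility by $m^n$ ought to force $\log\frac{m+a}{m+b}$ into $R$, i.e. $\frac{m+a}{m+b}\in 1+R$, which is exactly the relation $a\sim b$. Since $R$ itself may carry $m$-torsion I would not try to make this global computation rigorous, but instead run the argument level by level, where the torsion hypothesis is available on the quotients $R_k/R_{k+1}$.

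First I would record the two consequences of the hypotheses that drive the induction. From the opening observation in the proof of Lemma~\ref{countK0}, the assumption $\Tor(R_k/R_{k+1},\ZZZ/m)=0$ for all $k$ shows that each $R_1/R_j$ is $m$-torsion-free; hence $mh\in R_j$ implies $h\in R_j$, and multiplication by any power of $m$ is injective on each $R_k/R_{k+1}$. Second, since $\ell(n)=\mathrm{lcm}(1,\dots,n)$ divides $n!$, the coprimality of $m$ and $n!$ gives $\gcd(m,\ell(n))=1$, so I may fix integers $\alpha,\beta$ with $\alpha\ell(n)+\beta m=1$. The inductive claim is then: for each $k$ with $1\leq k\leq n+1$ there is $b_k\sim b$ with $a-b_k\in R_k$. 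The base $k=1$ is trivial with $b_1=b$. For the step, set $\delta=a-b_k\in R_k$. Because $b_k\sim b$, the forward direction (Lemma~\ref{lemma:ABC}(ii), exactly as in the proof of Theorem~\ref{thm:poly-obstruction2}) gives $p_n(a)-p_n(b_k)\in m^nR$, say $=m^n c$. Taylor-expanding $p_n(b_k+\delta)$ and discarding the terms divisible by $\delta^2\in R_{2k}\subseteq R_{k+1}$ together with the non-leading part of $p_n'(b_k)\delta\in R_1R_k=R_{k+1}$, and using that the leading coefficient of $p_n'$ is $\ell(n)m^{n-1}$, I obtain $p_n(a)-p_n(b_k)\equiv \ell(n)m^{n-1}\delta\pmod{R_{k+1}}$. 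Thus $\ell(n)m^{n-1}\delta\equiv m^n c\pmod{R_{k+1}}$; this first forces $c\in R_k$ (since $m^n c\in R_k$ and there is no $m$-torsion), and then, cancelling the injective $m^{n-1}$ on $R_k/R_{k+1}$, $\ell(n)\delta\equiv mc\pmod{R_{k+1}}$. Writing $\delta=\alpha\ell(n)\delta+\beta m\delta\equiv m(\alpha c+\beta\delta)\pmod{R_{k+1}}$ produces $h_k:=\alpha c+\beta\delta\in R_k$ with $\delta\equiv mh_k\pmod{R_{k+1}}$. Defining $b_{k+1}$ by $m+b_{k+1}=(m+b_k)(1+h_k)$, i.e. $b_{k+1}=b_k+mh_k+b_kh_k$, gives $b_{k+1}\sim b_k\sim b$ and $a-b_{k+1}=\delta-mh_k-b_kh_k\in R_{k+1}$, since $b_kh_k\in R_1R_k=R_{k+1}$. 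At level $k=n+1$ we get $a=b_{n+1}\sim b$, as required.

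The step I expect to be the main obstacle is the extraction $\delta\equiv mh_k\pmod{R_{k+1}}$ at each level: this is precisely where both parts of the torsion hypothesis are needed, first to pull the divisibility witness $c$ down into the correct filtration stratum $R_k$, and then to cancel the factor $m^{n-1}$, after which the coprimality $\gcd(m,\ell(n))=1$ lets me solve for $h_k$. By contrast, the verification that all the leftover terms ($\delta^2$, the term $b_kh_k$, and the non-leading part of $p_n'(b_k)\delta$) fall into $R_{k+1}$ is routine once the multiplicativity $R_qR_k\subseteq R_{q+k}$ of the filtration is invoked, and the fact that $\sim$ is an equivalence relation (so that the successive corrections compose) has already been established via the invertibility of $1+h$ in $K^0(X)$.
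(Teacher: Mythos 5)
Your proof is correct, but it follows a genuinely different route from the paper's. The paper proves the lemma by induction on the nilpotency length $n$: it passes to the quotient ring $S=R/R_{n+1}$, uses the recurrence \eqref{eqn:poly_n} to reduce a statement about $p_{n+1}$ to one about $p_n$, applies the inductive hypothesis in $S$ to get $a=b+mh+bh+r_{n+1}$ with $r_{n+1}\in R_{n+1}$, and then absorbs the error term by showing (via both parts of Lemma~\ref{lemma:ABC}, torsion-freeness, and coprimality) that $r_{n+1}=mh_{n+1}$ with $h_{n+1}\in R_{n+1}$. You instead keep $n$ fixed and run a Hensel-type successive approximation along the filtration: you replace $b$ by $\sim$-equivalent elements $b_k$ with $a-b_k\in R_k$, and at each level you linearize $p_n$ modulo $R_{k+1}$ --- only the constant term $\ell(n)m^{n-1}$ of $p_n'$ survives --- then solve $\ell(n)\delta\equiv mc$ for the correction $h_k$ using injectivity of multiplication by $m$ on $R_k/R_{k+1}$ and a B\'ezout identity for $\gcd(m,\ell(n))=1$. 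Each step checks out: the reduction $c\in R_k$ uses the $m$-torsion-freeness of $R_1/R_k$ (the same observation that opens the proof of Lemma~\ref{countK0}), the discarded Taylor terms lie in $R_1R_k\cup R_{2k}\subset R_{k+1}$ by multiplicativity of the filtration, and the forward direction of Lemma~\ref{lemma:ABC}(ii) keeps the divisibility hypothesis valid for each $b_k$ since $s_{n+1}(b,h)\in R^{n+1}=\{0\}$. What the paper's induction buys is brevity given the machinery already in place (the recurrence and Lemma~\ref{lemma:ABC} do all the work); what your argument buys is transparency about where each hypothesis enters, since torsion-freeness is used strictly level-by-level on the graded pieces and the coprimality appears only through $\gcd(m,\ell(n))=1$ (which is equivalent to $\gcd(m,n!)=1$, as both amount to $m$ having no prime factor $\leq n$). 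Two cosmetic points: the invertibility of $1+h$ that makes $\sim$ transitive and symmetric should be cited in the unitalization of $R$ (valid because every $h\in R$ is nilpotent, $h^{n+1}\in R_{n+1}=\{0\}$), not in $K^0(X)$; and the "Taylor expansion" should be phrased via the binomial identity $p(x+y)-p(x)-p'(x)y\in y^2\,\ZZZ[x,y]$ so that no division by factorials is ever invoked.
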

\begin{proof}
 We prove this by induction on $n$.  Suppose first that $n=1$. Then $p_1(a)-p_1(b)=a-b\in mR$ by assumption and so
 $a=b+mh$ for some $h\in R$. Since $R_2=\{0\}$ by assumption, $bh=0$ and so $a=b+mh+bh$, i.e. $a\sim b$.
 Suppose now that the statement is true for a given $n$ for all filtered rings $R$ as in the statement.
 Let $R$ be now a filtered ring such that $R_{n+2}=\{0\}$, $m$ and $(n+1)!$ are relatively prime and $\Tor(R_k/R_{k+1},\ZZZ/m)=0$ for all $k\geq 1$. Consider the  ring $S:=R/R_{n+1}$ with filtration
 $S_k=R_k/R_{n+1}$, $S_{n+1}=\{0\}$, and the quotient map $\pi:R\to S$.
 Let $a,b \in R$ satisfy $p_{n+1}(a)-p_{n+1}(b)\in m^{n+1}R$.
Since \[p_{n+1}(x)=\frac{\ell(n+1)}{\ell(n)} m p_{n}(x)+(-1)^n \frac{\ell(n+1)}{n+1}x^{n+1}.\]
 we deduce that
$\frac{\ell(n+1)}{\ell(n)} m\big(p_{n}(\pi(a))-p_{n}(\pi(b))\big)\in m^{n+1}S.$
Since $Tor(S,\ZZZ/m)=0$ and $(n+1)!$ and $m$ are relatively prime it follows that $p_{n}(\pi(a))-p_{n}(\pi(b))\in m^nS$. Since $S_{n+1}=0$ we obtain by the inductive hypothesis that $\pi(a)\sim \pi(b)$
in $S$ and hence $a=b+mh+bh+r_{n+1}$ for some $h\in R$ and $r_{n+1}\in R_{n+1}$.
We have that $(b+mh+bh)\cdot r_{n+1}=0$ and $r_{n+1}^2=0$ as these are elements of $R^{n+2}\subset R_{n+2}=\{0\}$.
Therefore, by  Lemma~\ref{lemma:ABC}(i),
\begin{align*}
p_{n+1}(a)=p_{n+1}(b+mh+bh+r_{n+1})&=p_{n+1}(b+mh+bh)+p_{n+1}(r_{n+1})\\
&=p_{n+1}(b+mh+bh)+\ell(n+1)m^nr_{n+1} \end{align*}
On the other hand,
$p_{n+1}(b+mh+bh)=p_{n+1}(b)+m^{n+1}v_{n+1}(h)$ by Lemma~\ref{lemma:ABC}(ii), since   $R_{n+2}=\{0\}$.
Therefore\[p_{n+1}(a)-p_{n+1}(b)=m^{n+1}v_{n+1}(h)+\ell(n+1)m^nr_{n+1}.\]
Since $p_{n+1}(a)-p_{n+1}(b)\in m^{n+1}R$ by assumption, we obtain that $\ell(n+1)m^nr_{n+1}\in m^{n+1}R$. Since $\Tor(R,\ZZZ/m)=0$ we deduce that $\ell(n+1)r_{n+1}\in mR$ and hence
that $r_{n+1}=m\, h_{n+1}$ for some $h_{n+1}\in R$ since $m$ is relatively prime to $\ell(n+1)$. We must have
that in fact $h_{n+1}\in R_{n+1}$ since $\Tor(R/R_{n+1},\ZZZ/m)=0$ and hence
$b\,h_{n+1}=0$. It follows that $a\sim b$ since we can now  rewrite $a=b+mh+bh+r_{n+1}$ as
$a=b+m(h+h_{n+1})+b(h+h_{n+1})$.
\end{proof}
We are now in position to prove Theorem~\ref{thm:main_result2-intro}.
\begin{proof}  By Theorem~\ref{Thm:complete-obstruction_O_E} it suffices to show that $[\widetilde E]\sim [\widetilde F]$ whenever $p_{\lfloor d/2 \rfloor}([\widetilde  E])-p_{\lfloor d/2 \rfloor}([\widetilde F])$ is divisible by $m^{\lfloor d/2 \rfloor}$. We have seen in the proof of Theorem~\ref{thm:main_result} that if $\Tor(H^*(X,\ZZZ),\ZZZ/m)=0$ then $\Tor(K^0_{2q}(X)/K^0_{2q+2}(X),\ZZZ/m)=0$.
Therefore the desired implication follows  from Lemma~\ref{lemma:C} applied to the ring $\widetilde K^0(X)$ filtered as in \eqref{eq:filtration} with $n=\lfloor d/2 \rfloor$.
 \end{proof}
\section{ suspensions}\label{section:susp}
In this final part of the paper we study $\OOO_{m+1}(SX)$ and the image of the map $\mathrm{Vect}(SX)\to\OOO_{m+1}(SX)$.
We shall use the universal coefficient exact sequence
\[
0\to K^1(X)\otimes \ZZZ/m\stackrel{\bar{\rho}}\to K^1(X,\ZZZ/m)\stackrel{\beta}\to \Tor(K^0(X),\ZZZ/m)\to 0,
\]
where $\beta$ is the Bockstein operation
and $\bar{\rho}$ is induced by the coefficient map $\rho$.
\begin{theorem}\label{thm:susp} Let $X$ be a compact metrizable space and let $m\geq 1$.
\begin{itemize}
\item[(i)] There is a bijection $\gamma: \OOO_{m+1}(SX)\to K_1(C(X)\otimes O_{m+1})\cong K^1(X,\ZZZ/m)$.
\item[(ii)] If $E,F\in Vect_{m+1}(SX)$, then $O_E\cong O_F$ as $C(SX)$-algebras if and only if
$[E]-[F]\in m K^0(SX)$.
\item[(iii)]
If $A\in  \OOO_{m+1}(SX)$ and $\beta(\gamma(A))\neq 0$, then $A$ is not isomorphic
to $O_E$ for any $E\in Vect_{m+1}(SX)$.
\end{itemize}
\end{theorem}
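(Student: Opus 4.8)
The plan is to handle the three parts in order, using the classifying space $B\Aut(O_{m+1})$ together with the homotopy computation of \cite{Dadarlat:homotopy-aut} for (i), Theorem~\ref{Thm:complete-obstruction_O_E} for (ii), and the fiberwise functoriality of the Cuntz--Pimsner construction for (iii). For part (i), I would first note that over a finite complex every member of $\OOO_{m+1}(SX)$ is locally trivial by \cite{Dad:bundles-fdspaces}, so that $\OOO_{m+1}(SX)\cong[SX,B\Aut(O_{m+1})]$; the general compact metrizable case follows by writing $X=\varprojlim X_i$ with $X_i$ finite polyhedra, since both $\OOO_{m+1}(S-)$ and $K^1(-,\ZZZ/m)$ send such inverse limits to direct limits. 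As $SX$ is a suspension, $[SX,B\Aut(O_{m+1})]\cong[X,\Omega B\Aut(O_{m+1})]\cong[X,\Aut(O_{m+1})]$ carries a natural group structure. The computation $\pi_{2k-1}(\Aut(O_{m+1}))\cong\ZZZ/m$ and $\pi_{2k-2}(\Aut(O_{m+1}))=0$ of \cite{Dadarlat:homotopy-aut} then identifies $[X,\Aut(O_{m+1})]$ with $K_1(C(X)\otimes O_{m+1})$, as in the homotopy classification of \cite{Dad:bundles-fdspaces}. Finally the Künneth theorem for $C(X)\otimes O_{m+1}$, with $K_0(O_{m+1})=\ZZZ/m$ and $K_1(O_{m+1})=0$, reproduces the universal coefficient sequence displayed before the theorem and hence $K_1(C(X)\otimes O_{m+1})\cong K^1(X,\ZZZ/m)$; composing these identifications defines $\gamma$.

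For part (ii), Theorem~\ref{Thm:complete-obstruction_O_E} gives that $O_E\cong O_F$ iff $1-[E]=(1-[F])h$ for some $h$ of virtual rank one, i.e. $h=1+h_0$ with $h_0\in\widetilde K^0(SX)$. The crucial structural fact I would invoke is that $\widetilde K^0(SX)$ is a square-zero ideal: the reduced diagonal of the suspension $SX$ is null-homotopic, so all products of reduced classes vanish. Writing $[E]=(m+1)+[\widetilde E]$ and $[F]=(m+1)+[\widetilde F]$, the relation $-m-[\widetilde E]=(-m-[\widetilde F])(1+h_0)$ collapses, since $[\widetilde F]h_0=0$, to $[\widetilde E]-[\widetilde F]=mh_0$. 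Because $[E]-[F]=[\widetilde E]-[\widetilde F]$ has virtual rank zero, this is exactly the condition $[E]-[F]\in mK^0(SX)$.

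For part (iii), I would identify the natural map $E\mapsto O_E$ from $\Vect_{m+1}(SX)$ to $\OOO_{m+1}(SX)$ as the one induced by the homomorphism $U(m+1)\to\Aut(O_{m+1})$ coming from the quasi-free action of the unitaries of $\CCC^{m+1}$ on $O_{m+1}$: by fiberwise functoriality of the Cuntz--Pimsner construction, $O_E$ is the bundle associated to the frame bundle of $E$ by this action, so on classifying spaces the map is $BU(m+1)\to B\Aut(O_{m+1})$. After the identifications of part (i) it becomes $[X,U(m+1)]\to[X,\Aut(O_{m+1})]$, and under $\Vect_{m+1}(SX)\cong[X,U(m+1)]$ the class $[\widetilde E]\in\widetilde K^0(SX)\cong K^1(X)$ is the stabilization of $E$. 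The computation of \cite{Dadarlat:homotopy-aut} shows that $U(m+1)\to\Aut(O_{m+1})$ realizes the reduction $\ZZZ\to\ZZZ/m$ on odd homotopy groups in the stable range, so the induced map $K^1(X)\to K_1(C(X)\otimes O_{m+1})\cong K^1(X,\ZZZ/m)$ is the coefficient map $\rho=\bar\rho\circ(\mathrm{red})$. Hence $\gamma(O_E)=\rho([\widetilde E])\in\mathrm{image}(\bar\rho)=\ker\beta$ by exactness of the universal coefficient sequence, and the contrapositive of $\beta\gamma(O_E)=0$ is precisely the assertion (iii).

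The \textbf{main obstacle} is the identification in part (iii) of $\gamma\circ(E\mapsto O_E)$ with the coefficient map $\rho$. This rests on two points requiring care: first, that the square comparing $\Vect_{m+1}(SX)\to\OOO_{m+1}(SX)$ with $BU(m+1)\to B\Aut(O_{m+1})$ genuinely commutes, i.e. that $O_E$ really is the bundle associated to the quasi-free $U(m+1)$-action and that this is compatible with the clutching of $A\in\OOO_{m+1}(SX)$ over the equator of $SX$; and second, that the induced map $\pi_{2k-1}(U(m+1))=\ZZZ\to\pi_{2k-1}(\Aut(O_{m+1}))=\ZZZ/m$ is reduction modulo $m$ (up to a unit) rather than some other surjection, for which the explicit homotopy computation of \cite{Dadarlat:homotopy-aut} is indispensable. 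Parts (i) and (ii) are comparatively routine once the classifying-space description and the square-zero property of $\widetilde K^0(SX)$ are in hand.
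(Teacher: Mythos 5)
Your parts (i) and (ii) are sound, and (ii) deserves comment because it takes a genuinely different route from the paper. For (i) you and the paper do essentially the same thing (the paper cites \cite{Dad:bundles-fdspaces} and re-derives the bijection via the map $\varphi\mapsto\sum_j\varphi(v_j)v_j^*$ and the isomorphism $\nu:O_{m+1}\to M_{m+1}(O_{m+1})$). For (ii), the paper does \emph{not} invoke Theorem~\ref{Thm:complete-obstruction_O_E}: it builds a commutative diagram out of the homomorphism $\alpha:U(m+1)\to\Aut(O_{m+1})$ of \cite{EHW}, identifies $\gamma\circ\alpha_*$ with the coefficient map $\rho$, and reads off (ii) from exactness of the universal coefficient sequence (using $\ker\rho=mK^1(X)$). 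Your argument instead combines Theorem~\ref{Thm:complete-obstruction_O_E} with the square-zero property of $\widetilde K^0(SX)$, and it is correct: the computation $1-[E]=(1-[F])(1+h_0)\Leftrightarrow[\widetilde E]-[\widetilde F]=mh_0$ is right, and the rank argument converting $m\widetilde K^0(SX)$ into $mK^0(SX)$ is right. Your route is more self-contained and purely K-theoretic; the paper's route has the advantage that the same diagram simultaneously yields (iii).

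The genuine gap is in part (iii), and it sits exactly at the point you flagged as the ``main obstacle.'' Everything reduces to showing that, under the identifications of part (i), the composite $\Vect_{m+1}(SX)\to\OOO_{m+1}(SX)\stackrel{\gamma}{\to}K^1(X,\ZZZ/m)$ is $E\mapsto\rho([\widetilde E])$, and you propose to obtain this from the assertion that $\alpha$ induces reduction mod $m$ on odd homotopy groups. That inference is not valid: the effect of a map of spaces on homotopy groups determines the induced natural transformation on $[X,-]$ only when $X$ is a sphere; for general compact metrizable (even finite CW) $X$, two maps can agree on all homotopy groups without being homotopic, so no identification of $[X,U(m+1)]\to[X,\Aut(O_{m+1})]\cong K^1(X,\ZZZ/m)$ with $\rho$ follows from such data. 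Moreover, your formulation presupposes that this composite factors through the stabilization $[X,U(m+1)]\to K^1(X)$, which is itself part of what must be proved (and the cited reference computes the homotopy groups of $\Aut(O_{m+1})$, not this composite). The paper closes the gap by a C*-algebraic, manifestly natural computation: $\eta=\nu\circ\gamma_0\circ\alpha$ satisfies $\eta(u)=\sum_{i,j}u_{ij}\nu(v_iv_j^*)$, hence is the restriction to unitaries of the unital $*$-homomorphism $\bar\eta:M_{m+1}(\CCC)\to M_{m+1}(O_{m+1})$, $e_{ij}\mapsto\nu(v_iv_j^*)$; since any such unital embedding is unitarily equivalent to $a\mapsto a\otimes 1_{O_{m+1}}$, the induced map on $[X,-]$ is the coefficient map $\rho$, naturally in $X$, giving both the factorization and the commutativity of the diagram at once. (If you want to salvage a homotopy-theoretic argument for (iii) alone, one option is Yoneda: the assignment $f\mapsto\beta(\gamma(O_{E_f}))$ is a natural transformation $[X,U(m+1)]\to K^0(X)$ with values in $m$-torsion, hence is classified by an $m$-torsion element of $K^0(U(m+1))$, which vanishes because $K^*(U(m+1))$ is torsion free; but some such additional argument is indispensable.)
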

\begin{proof} Part (i) is proved in \cite{Dad:bundles-fdspaces}. We revisit the argument from \cite{Dad:bundles-fdspaces}
as it is needed for the proof of the other two parts. Let $v_1,...,v_{m+1}$ be the canonical generators of $O_{m+1}$.
There is natural a  map $\gamma_0:\Aut(O_{m+1})\to U(O_{m+1})$ which maps an automorphism $\varphi$ to the unitary
$\sum_{j=1}^{m+1} \varphi(v_j)v_j^*$. We showed in \cite[Thm.~7.4]{Dad:bundles-fdspaces} that
$\gamma_0$ induces a bijection of homotopy classes $[X,\Aut(O_{m+1})]\to [X,U(O_{m+1})]$.
By \cite{Ror:encyclopedia} there is a $*$-isomorphism $\nu:O_{m+1}\to M_{m+1}( O_{m+1})$.
We have bijections
$\OOO_{m+1}(SX)\cong [SX,B\Aut(O_{m+1})]\cong [X,\Aut(O_{m+1})]$ and
\[
\xymatrix{
[X,\Aut(O_{m+1})]\ar[r]^-{(\gamma_0)_*} &[X,U(O_{m+1})]\ar[r]^-{\nu_*} &[X,U(M_{m+1}(O_{m+1}))]
\cong K_1(C(X)\otimes \OOO_{m+1})}.\]
The composition of these maps defines the bijection $\gamma$ from (i).
We are now prepared to prove (ii) and (iii).
Consider the monomorphism of groups
$\alpha: U(m+1)\to\Aut (O_{m+1})$ introduced in \cite{EHW}. If $u\in U(m+1)$ has components $u_{ij}$, then
$\alpha_u(v_j)=\sum_{i=1}^{m+1} u_{ij}v_i$.
The map $\alpha$ induces a map $BU(m+1)\to B\Aut(O_{m+1})$ which in its turn induces
the natural map $\alpha_*:Vect_{m+1}(Y)\to \OOO_{m+1}(Y)$ that we are studying.
Let $\eta$ be the composition of the maps from the diagram
\[
\xymatrix{
U(m+1)\ar[r]^{\alpha}& \Aut(O_{m+1})\ar[r]^{\gamma_0}
&U(O_{m+1})\ar[r]^-{\nu}&U(M_{m+1}(O_{m+1})).
}\]
 An easy calculation shows that $\eta(u)=\sum_{i,j=1}^{m+1}u_{ij}\nu(v_iv_j^*)$, where $u_{ij}$ are the components of the unitary $u$.
 Let us observe that $\eta$ is induced by a unital $*$-homomorphism
$\bar{\eta}:M_{m+1}(\CCC)\to M_{m+1}(O_{m+1})$.  It follows that there is a unitary $w\in M_{m+1}(O_{m+1})$
such that $w\bar{\eta}(a)w^*=a\otimes 1_{O_{m+1}}$, for all $a\in M_{m+1}(\CCC)$.
This implies that $\eta$ will induce the coefficient map $\rho: K^1(X)\to K^1(X,\ZZZ/m)$.

We a commutative diagram
\[
\xymatrix{
Vect_{m+1}(SX)\ar[d]_{\alpha_*}\ar[r]& [SX,BU(m+1)]\ar[d]\ar[r]& [X,U(m+1)]\ar[d]_{\alpha_*}\ar@{=}[r] &[X,U(M_{m+1}(\CCC))]\ar[d]^{\eta_*}\\
\OOO_{m+1}(SX)\ar[r]& [SX,B\Aut(O_{m+1})]\ar[r]& [X,\Aut(O_{m+1})]\ar[r]^-{(\nu\gamma_0)_*} &[X,U(M_{m+1}(O_{m+1}))]
}\]
and hence a commutative diagram
\[
\xymatrix{
&Vect_{m+1}(SX)\ar[r]^{\alpha_*}\ar[d]& \OOO_{m+1}(SX)\ar[d]^{\gamma}\\
                   K^1(X)\ar[r]^{\times m}  &  K^1(X)\ar[r]^{\rho}               &  K^1(X,\ZZZ/m)\ar[r]^-\beta&\Tor(K^0(X),\ZZZ/m)\to 0
}\]
Now both $(ii)$ and $(iii)$ follow from the commutativity of the diagram above and the exactness of its bottom row.
\end{proof}

\end{document}